\documentclass[letterpaper, 10 pt, conference]{ieeeconf}
\IEEEoverridecommandlockouts
\overrideIEEEmargins   

\usepackage{cite}
\usepackage{amsmath,amssymb,amsfonts}
\usepackage{algorithmic}
\usepackage{graphicx}
\usepackage{textcomp}
\usepackage{amsthm}
\usepackage{subfigure}
\usepackage{color}

\def\BibTeX{{\rm B\kern-.05em{\sc i\kern-.025em b}\kern-.08em
    T\kern-.1667em\lower.7ex\hbox{E}\kern-.125emX}}
\newtheorem{theorem}{Theorem}[]
\newtheorem{lemma}[theorem]{Lemma}
\newtheorem{proposition}[theorem]{Proposition}

\theoremstyle{definition}

\begin{document}

\title{Optimal Geodesic Curvature Constrained Dubins' Path on Sphere with Free Terminal Orientation\\ 
\thanks{$^{1}$Deepak Prakash Kumar and Swaroop Darbha are with the Department of Mechanical Engineering, Texas A\&M University, College Station, TX, 77843, USA (e-mail: {\tt\footnotesize deepakprakash1997@gmail.com, dswaroop@tamu.edu})        }%
\thanks{$^{2}$Satyanarayana G Manyam is with Infoscitex corporation, Dayton, OH 45431, USA
(e-mail: {\tt\footnotesize msngupta@gmail.com})}%
\thanks{$^{3}$Dzung Tran is with the University of Dayton Research Institute, Dayton, OH 45469 USA (e-mail:
{\tt\footnotesize dzung.tran.ctr@afrl.af.mil})}
\thanks{$^{4}$David Casbeer is with the Autonomous Control Branch, Air Force Research Laboratory, Wright-Patterson Air Force Base, OH 45433 USA (e-mail:
{\tt\footnotesize david.casbeer@us.af.mil})}
\thanks{Distribution Statement A: Approved for Public Release; Distribution is Unlimited. PA\# AFRL-2023-3116}
}

\author{Deepak Prakash Kumar$^{1}$ \and Swaroop Darbha$^{1}$ \and Satyanarayana Gupta Manyam$^{2}$ \and Dzung Tran$^{3}$ \and David W. Casbeer$^{4}$}

\maketitle

\begin{abstract}
In this paper, motion planning for a vehicle moving on a unit sphere with unit speed is considered, wherein the desired terminal location is fixed, but the terminal orientation is free. The motion of the vehicle is modeled to be constrained by a maximum geodesic curvature $U_{max},$ which controls the rate of change of heading of the vehicle such that the maximum heading change occurs when the vehicle travels on a tight circular arc of radius $r = \frac{1}{\sqrt{1 + U_{max}^2}}$. Using Pontryagin's Minimum Principle, the main result of this paper shows that for $r \leq \frac{1}{2}$, the optimal path connecting a given initial configuration and a final location on the sphere belongs to a set of at most seven paths. The candidate paths are of type $CG, CC,$ and degenerate paths of the same, where $C \in \{L, R\}$ denotes a tight left or right turn, respectively, and $G$ denotes a great circular arc.
\end{abstract}

\section{Introduction}

Motion (or path) planning for autonomous vehicles, particularly unmanned aerial vehicles (UAVs), involves connecting given initial and final configurations with minimum cost and is of significant interest for military and civilian applications. The path planning problem in a plane is typically addressed by modeling the vehicle to be a Dubins vehicle, wherein the vehicle travels forward at a unit speed and has a bound on the curvature \cite{Dubins}. The classical Markov-Dubins problem was solved in \cite{Dubins}, wherein the authors showed that the candidate curvature-constrained paths that connect given initial and final configurations on a plane with minimum length is of type $CSC,$ $CCC$, and corresponding degenerate paths. Here, $C \in \{L, R \}$ denotes a tight left or right turn, respectively, of radius corresponding to the curvature bound, and $S$ denotes a straight line segment. A variation in the problem, wherein the vehicle can move forwards or backward with atmost unit speed, called a Reeds-Shepp vehicle, was studied in \cite{Reeds_Shepp}.

The results in both \cite{Dubins} and \cite{Reeds_Shepp} were obtained without using Pontryagin's Minimum Principle (PMP) \cite{PMP}. Studies such as \cite{boissonat} and \cite{sussman_geometric_examples} utilize PMP to characterize the optimal path systematically for these problems. Recently, in \cite{phase_portrait_kaya}, PMP was combined with a phase portrait approach to further simplify the proofs for the Markov-Dubins problem. 
A generalization of the classical Markov-Dubins problem, called the weighted Markov-Dubins problem, was studied in \cite{weighted_Markov_Dubins}, wherein the sinistral and dextral curvature bounds may be different and turns incur a penalty. The authors employed PMP and phase portraits to obtain candidate paths of at most five segments.

Few studies have considered path planning in 3D. In \cite{monroy}, the author considered path planning for a Dubins vehicle on a Riemannian manifold. In particular, the author showed that the Dubins result on 2D extends to a unit sphere for $r = \frac{1}{\sqrt{2}},$ where $r$ is the radius of the tight turn. In \cite{sussman_3D}, a {\it total} curvature-constrained 3D Dubins problem was considered; only the tangential direction of motion and the location were considered specified at the initial and final configurations. The author showed that the optimal path is either a helicoidal arc or a concatenation of at most three segments. Studies such as \cite{time_optimal_synthesis_SO3} and \cite{time_optimal_control_satellite} consider a generic time-optimal problem on $SO (3)$ with one and two control inputs, respectively, but assume the bound on the control input to be fixed to be equal to one. In \cite{3D_Dubins_sphere}, the authors modeled the Dubins problem on a sphere as a geodesic-curvature constrained shortest-path problem. To this end, the vehicle was modeled using a Sabban frame. The authors showed that for $r \leq \frac{1}{2}$ and for normal controls (wherein the adjoint variable to the integrand in the objective functional is non-zero), the optimal path is of type $CGC,$ $CCC,$ or a degenerate path of the same. Here, $C \in \{L, R\}$ and $G$ denote a tight left or right turn and a great circular arc
, respectively.

From the papers surveyed, it can be observed that no study considered a motion planning problem on a sphere with a prescribed final location but with free final orientation. 
The main contribution of this paper shows that the optimal path for this problem is $LG, RG, LR, RL,$ or a degenerate path of the same for $r \leq \frac{1}{2}.$ Further, for $LR$ and $RL$ paths, the angle of the final segment is greater than or equal 
to $\pi$. The proofs for the same build on proofs and results from \cite{3D_Dubins_sphere}.

\section{Problem Formulation}

In the considered problem, the desired final configuration of the vehicle on the sphere may be partially specified, i.e., only the location is specified. In that case, one would expect a simplification of the characterization of the optimal path compared to the path planning problem on a sphere with a given final configuration \cite{3D_Dubins_sphere}. The formal specification of the problem is as follows:
\begin{align} \label{eq: objective_functional}
    J = \min \quad \int_0^L 1 \; ds,
\end{align}
subject to 
\begin{align}
    \frac{d{\mathbf X}}{ds} &= {\mathbf T}(s), \label{eq: X_evolution} \\
    \frac{d{\mathbf T}}{ds} &= -{\bf X}(s) + u_g(s) {\mathbf N}(s), \label{eq: T_evolution} \\
    \frac{d{\mathbf N}}{ds} &= -u_g(s) {\mathbf T}(s), \label{eq: N_evolution}
\end{align}
with boundary conditions
\begin{align} \label{eq: boundary_conditions}
    \mathbf{R}(0) = I, \quad \mathbf{R} (L) e_1 = \mathbf{X}_f,
\end{align}
where $e_1$ is the first column of the identity matrix $I$, and $\mathbf{X}_f$ is a unit vector indicating the desired final location on the unit sphere, which is the first column of the desired final orientation. In \eqref{eq: X_evolution}, \eqref{eq: T_evolution}, and \eqref{eq: N_evolution}, $\mathbf{X}, \mathbf{T},$ and $\mathbf{N}$ denote the position vector, tangent vector, and the tangent-normal vectors, respectively. The three vectors form an orthonormal basis on the sphere, as shown in Fig.~\ref{fig: sphere_model} (from \cite{3D_Dubins_sphere}), and describe the location and orientation of the vehicle. Hence, the rotation matrix $\mathbf{R} = [\mathbf{X}, \mathbf{T}, \mathbf{N}]$ is constructed using these three vectors, using which the boundary conditions are specified in \eqref{eq: boundary_conditions}. Further, the control input $u_g \in [-U_{max}, U_{max}]$ is the geodesic curvature \cite{3D_Dubins_sphere}.
\begin{figure}[htb!]
    \centering
    \includegraphics[width = 0.5\linewidth]{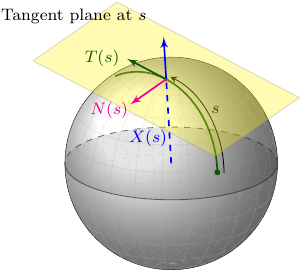}
    \caption{Dubins model for motion on a sphere \cite{3D_Dubins_sphere}}
    \label{fig: sphere_model}
\end{figure}

The Hamiltonian is defined through the adjoint variables $e, \lambda_1 (s), \lambda_2 (s), \lambda_3 (s)$ to apply Pontryagin's minimum principle (PMP) as
\begin{align*}
H &= e + \langle \lambda_1, {\mathbf T} \rangle + \langle\lambda_2, -{\mathbf X} + u_g {\mathbf N} \rangle + \langle\lambda_3, - u_g {\mathbf T} \rangle \\
&= e + \langle\lambda_1, {\mathbf T}\rangle - \langle \lambda_2, {\mathbf X} \rangle + u_g \left(\langle\lambda_2, {\mathbf N}\rangle - \langle\lambda_3, {\mathbf T}\rangle \right),
\end{align*}
where $e \geq 0$ from PMP. In the above equation, the explicit dependence of $H$ on $e, s,$ and the adjoint variables is not shown for brevity. Define:
\begin{align}
    A &:= \langle\lambda_2, {\mathbf N}\rangle - \langle\lambda_3, {\mathbf T}\rangle, \label{eq: definition_A} \\
    B &:= \langle\lambda_3, {\mathbf X}\rangle - \langle\lambda_1, {\mathbf N}\rangle, \\
    C &:= \langle\lambda_1, {\mathbf T}\rangle - \langle\lambda_2, {\mathbf X}\rangle.
\end{align}
Accordingly, 
\begin{align}
\label{eq: adjoint}
\frac{dA}{ds} = B, \quad \quad \frac{dB}{ds} = - A + u_g C, \quad \quad \frac{dC}{ds} = - u_gB, 
\end{align}
and 
\begin{align} \label{eq: Hamiltonian}
    H = e + C + u_g A.
\end{align}
Since the evolution equations in \eqref{eq: X_evolution}, \eqref{eq: T_evolution}, \eqref{eq: N_evolution} are autonomous, and the considered problem is free-time, the optimal control actions correspond to $H\equiv 0$ \cite{PMP_lecture_notes}.

\begin{lemma} \label{lemma: control_actions}
    The optimal control actions are given by $u_g \in \{-U_{max}, 0, U_{max} \}$ for $e > 0$. For $e = 0,$ $u_g \in \{-U_{max}, U_{max}\}$ if $A \not\equiv 0.$
\end{lemma}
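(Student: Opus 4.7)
The plan is to apply Pontryagin's Minimum Principle pointwise to the Hamiltonian \eqref{eq: Hamiltonian} and then separately analyze the possibility of singular arcs using the adjoint dynamics \eqref{eq: adjoint} together with the transversality condition $H \equiv 0$ noted above the lemma.

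First, I would observe that $H = e + C + u_g A$ is affine in the control $u_g$ with slope $A$, and PMP requires minimizing $H$ over $u_g \in [-U_{max}, U_{max}]$. Hence, whenever $A(s) \neq 0$, the minimizer is uniquely determined: $u_g(s) = -U_{max}$ if $A(s) > 0$, and $u_g(s) = U_{max}$ if $A(s) < 0$. The only way $u_g$ can take a value strictly inside $(-U_{max}, U_{max})$ on a set of positive measure is for $A$ to vanish identically on an interval, i.e., a singular arc.

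Next, I would analyze such a singular interval by repeated differentiation. Suppose $A \equiv 0$ on some subinterval $I$. Then using \eqref{eq: adjoint}, $\dot{A} = B \equiv 0$ on $I$, and $\dot{B} = -A + u_g C = u_g C \equiv 0$ on $I$. Combining $A \equiv 0$ with $H \equiv 0$ (which holds throughout the trajectory since the problem is free-time with autonomous dynamics) gives $e + C \equiv 0$, so $C \equiv -e$ on $I$. Substituting into $u_g C \equiv 0$ yields $u_g \, e \equiv 0$ on $I$.

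Finally, a two-case split finishes the argument. If $e > 0$, then $u_g \equiv 0$ on any singular interval, so combined with the bang-bang analysis above, $u_g(s) \in \{-U_{max}, 0, U_{max}\}$ at every $s$, as claimed. If $e = 0$, then the identity $u_g e \equiv 0$ is trivially satisfied and does not pin down the singular control; however, by hypothesis $A \not\equiv 0$, so no singular subinterval exists, and the bang-bang conclusion $u_g \in \{-U_{max}, U_{max}\}$ follows directly from the first paragraph. The main subtlety I anticipate is handling the abnormal case $e=0$ cleanly and being explicit that ``$A \not\equiv 0$'' rules out singular subintervals rather than merely isolated zeros of $A$; the rest is a routine application of PMP to a control-affine Hamiltonian.
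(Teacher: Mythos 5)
Your proposal is correct and follows essentially the same route as the paper: pointwise minimization of the control-affine Hamiltonian gives $u_g = -U_{max}\,\mathrm{sgn}(A)$ off the singular set, and on a singular interval the chain $A\equiv 0 \Rightarrow B\equiv 0$, together with $H\equiv 0$ forcing $C=-e$ and $\dot B = -u_g e \equiv 0$, yields $u_g\equiv 0$ when $e>0$. Your explicit remark that ``$A\not\equiv 0$'' must exclude singular subintervals (not just isolated zeros) is a useful clarification the paper leaves implicit.
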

\begin{proof}
    Since $H$ is linear in $u_g$ and $u_g$ pointwise minimizes $H$ from PMP, $u_g = -U_{max} \text{sgn} (A)$ when $A \neq 0$. Here, sgn is the sign function. Suppose $A \equiv 0.$ Then, $B = \frac{dA}{ds} \equiv 0$, and $\frac{dC}{ds} = -u_g B \equiv 0.$ Hence, $C$ is a constant. Since $H \equiv 0,$ $C = -e$ from \eqref{eq: Hamiltonian}. 
    If $e > 0,$ then $u_g \equiv 0$ as $\frac{dB}{ds} = -A + u_g C = 0 - u_g e \equiv 0$ (since $B \equiv 0$). 
\end{proof}

It should be noted here that the segment corresponding to $u_g = U_{max}$ is a left tight turn of radius $r = \frac{1}{\sqrt{1 + U_{max}^2}},$ whereas for $u_g = -U_{max},$ the segment is a right tight turn of the same radius. Further, corresponding to $u_g = 0,$ the segment is an arc of a great circle \cite{3D_Dubins_sphere}. Henceforth, $C$ will denote a tight turn, $L$ and $R$ will denote the left and right tight turns, respectively, and $G$ will denote a great circular arc.

Since the terminal orientation, i.e., $\mathbf{T} (L)$ and $\mathbf{N} (L),$ are free, the corresponding adjoint variables, which are $\lambda_2$ and $\lambda_3$, are zero at $s = L$. Since $\lambda_2 (L) = \lambda_3 (L) = 0,$ $A (L) = 0$ from its definition in \eqref{eq: definition_A}. This particular terminal condition of $A$ will be repeatedly used in this paper to obtain the candidate paths for the considered problem.

\section{Characterizing the Optimal Path}

In this section, the following main result of the paper will be proved:
\begin{theorem} \label{theorem: optimal_paths}
    For $r \leq \frac{1}{2},$ the optimal paths for free terminal orientation are $LG, RG, LR, RL,$ and degenerate paths of the same. Further, for paths $LR$ and $RL$, the angle of the final segment is greater than or equal to $\pi$.
\end{theorem}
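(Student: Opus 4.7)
The strategy is to import the classification from \cite{3D_Dubins_sphere}, which shows that for $r \leq 1/2$ and $e > 0$ the optimal path for the fully-specified-orientation problem is of type $CGC$, $CCC$, or a degenerate form, and then to impose the extra transversality condition defining our setting: since $\mathbf{T}(L)$ and $\mathbf{N}(L)$ are free, $\lambda_2(L) = \lambda_3(L) = 0$, and \eqref{eq: definition_A} yields $A(L) = 0$. The entire argument is a filter of the $CGC$/$CCC$ list through this single condition, supplemented by a direct treatment of the abnormal case $e = 0$ via Lemma \ref{lemma: control_actions}.

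If the path ends in $G$, then $u_g = 0$ on the terminal segment and Lemma \ref{lemma: control_actions} forces $A \equiv B \equiv 0$, so $A(L) = 0$ is automatic, leaving the candidates $LG$, $RG$ and their degeneracies. For a $CGC$ path with nontrivial terminal $C$, the preceding $G$ gives $A(s_0) = 0$, $B(s_0) = 0$, and $H \equiv 0$ gives $C(s_0) = -e$; integrating \eqref{eq: adjoint} with $u_g = \pm U_{\max}$ constant yields
\begin{equation*}
A(s) = A_{\mathrm{eq}}\bigl(1 - \cos((s-s_0)/r)\bigr), \qquad A_{\mathrm{eq}} = -u_g\, e\, r^2 \neq 0,
\end{equation*}
so $A(L) = 0$ forces the terminal $C$ to be a $2\pi k$ rotation, which returns the vehicle to its entry configuration and is therefore removable. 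Thus $CGC$ collapses to a strictly shorter $CG$ path, and no genuine $CGC$ candidate survives.

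For the $CCC$ family, at each internal junction $A$ passes through zero with $C = -e$ (by $H \equiv 0$), and integrating \eqref{eq: adjoint} on the terminal $C$ gives
\begin{equation*}
A(\phi) = A_{\mathrm{eq}}(1-\cos\phi) + r B_0 \sin\phi, \qquad \phi = (s-s_0)/r,
\end{equation*}
with $B_0 = B(s_0)$. The control law $u_g = -U_{\max}\,\mathrm{sgn}(A)$ combined with $A(s_0) = 0$ forces $B_0$ and $A_{\mathrm{eq}}$ to share a sign. Factoring
\begin{equation*}
A(\phi) = 2\sin(\phi/2)\bigl[\,A_{\mathrm{eq}}\sin(\phi/2) + r B_0 \cos(\phi/2)\,\bigr],
\end{equation*}
the first nontrivial zero occurs at $\tan(\phi^\star/2) = -r B_0 / A_{\mathrm{eq}} < 0$, placing $\phi^\star \in (\pi, 2\pi)$ in the normal case; the abnormal case $e = 0$ of Lemma \ref{lemma: control_actions} gives $A_{\mathrm{eq}} = 0$, whence $A(\phi) = rB_0 \sin\phi$ and the limiting value $\phi^\star = \pi$. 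Together, this yields the terminal-segment bound $\phi^\star \geq \pi$ claimed in Theorem \ref{theorem: optimal_paths}.

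The main obstacle is the elimination of genuine three-segment $CCC$ extremals. The angle bound on the terminal segment follows cleanly from the factorization above, but ruling out $LRL$ and $RLR$ requires a separate length comparison against an $LR$/$RL$ alternative reaching the same final position on the sphere, using the phase-portrait arguments from \cite{3D_Dubins_sphere} together with the hypothesis $r \leq 1/2$ to show that the leading $C$ can always be collapsed to zero length without violating feasibility. Once this is done, the optimal-path list reduces to $\{LG, RG, LR, RL\}$ together with their degenerate forms, completing the proof of Theorem \ref{theorem: optimal_paths}.
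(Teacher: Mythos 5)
Your normal-case ($e=1$) filtering is sound and in places cleaner than the paper: integrating the adjoint system on the terminal $C$ segment with data $A(s_0)=0$, $B(s_0)=B_0$, $C(s_0)=-e$ to get $A=A_{\mathrm{eq}}(1-\cos\phi)+rB_0\sin\phi$ correctly disposes of the terminal $C$ of a $CGC$ (where $B_0=0$ because $A\equiv B\equiv 0$ on the preceding $G$ arc) and places the first zero of $A$ on the last segment of a $CC$ path at $\phi^\star\in(\pi,2\pi)$, which is exactly the ``$\geq\pi$'' claim. The paper reaches the same two conclusions instead via Lemma~\ref{lemma: axial_vector} applied to $\psi(L)=e^{\hat\Omega\phi}\psi(s_1)$ with $\psi(s_1)=(0,0,-1)^T$.

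There are, however, two genuine gaps. First, you never actually eliminate a nontrivial three-segment $CCC$; you name it ``the main obstacle'' and defer it to an unspecified length comparison in which ``the leading $C$ can always be collapsed to zero length.'' That is not the paper's argument and is not justified. The paper first proves, using $A(L)=0$, conservation of $\|\psi\|^2$, and Lemma~\ref{lemma: axial_vector}, that an optimal nontrivial $CCC$ must have equal second and third angles, i.e.\ be of type $C_\alpha C_{\pi+\beta}C_{\pi+\beta}$, and only then can it invoke Lemma~3.8 of \cite{3D_Dubins_sphere}, which constructs a strictly shorter $RLR$ for precisely that family when $r\leq\frac{1}{2}$; without the $\phi_2=\phi_3$ step your filter does not connect to any known non-optimality result. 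Second, the abnormal case $e=0$ cannot be handled by ``filtering the $CGC$/$CCC$ list,'' because that list is derived in \cite{3D_Dubins_sphere} only for $e=1$. For $e=0$ the paper classifies extremals from scratch via the phase portrait of $A$: the path is an all-$C$ concatenation $C_\alpha C_\pi C_\pi\cdots C_\pi$ with a priori arbitrarily many segments, and reducing this to at most two segments requires the explicit construction of an $LG$ path replacing $L_\pi R_\pi$ together with the monotonicity argument for $\Delta l(r)$ --- an entire lemma your proposal omits. Your $e=0$ remark ($A_{\mathrm{eq}}=0\Rightarrow\phi^\star=\pi$) recovers only the segment-angle statement, not the bound on the number of segments.
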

To this end, the lemmas that follow in this section will be utilized.

For the considered problem, the optimal path will be characterized using PMP. Since $e \geq 0,$ two cases can be considered depending on the value of $e:$ $e = 0,$ and $e > 0$. In the former case, the obtained solutions are abnormal solutions since they are independent of the objective functional in \eqref{eq: objective_functional}. This particular case will be addressed using the phase portrait of $A$. In the latter case, $e > 0$ can be replaced with $e = 1$ without loss of generality since all other adjoint variables will be scaled accordingly. Since the candidate paths for prescribed initial and final configurations on a sphere were obtained in \cite{3D_Dubins_sphere} for $e = 1$, the obtained list of candidate paths will be used for the free terminal orientation problem and further reduced. It should be noted that if a path is not optimal for given initial and final configurations on a sphere, it cannot be optimal for the free terminal orientation problem.

\subsection{Candidate paths for $e = 0$}

Since the control action depends on $A,$ the optimal path can be characterized by analyzing the evolution of $A$. Since $A$ is differentiable, and $\frac{dA}{ds} = B,$ $\frac{dA}{ds}$ is differentiable. Hence, the phase portrait of $A$ can be used to determine the candidate paths. Using \eqref{eq: adjoint}, 
\begin{align} \label{eq: evolution_d2Ads2}
    \frac{dB (s)}{ds} = - A (s) + u_g (s) C (s) = \frac{d^2 A (s)}{d s^2}.
\end{align}
Since $H \equiv 0$ for optimal control, $C (s)$ can be obtained from \eqref{eq: Hamiltonian} as
\begin{align}
    C (s) = -e - u_g (s) A (s) = -u_g (s) A (s).
\end{align}
Substituting the above equation in \eqref{eq: evolution_d2Ads2}, the evolution of $A (s)$ is obtained as
\begin{align} \label{eq: dynamics_A_e_zero}
    \frac{d^2 A (s)}{d s^2} + \left(1 + u_g^2 (s) \right) A (s) &= 0.
\end{align}
If $A < 0$ or $A > 0,$ $u_g^2 (s) = U_{max}^2$ from Lemma~\ref{lemma: control_actions}. Hence, the solution for $A (s)$ and $\frac{dA}{ds} (s)$ is given by
\begin{align}
    A (s) &= \lambda \sin{\left(\sqrt{1 + U_{max}^2} s - \phi \right)}, \label{eq: evolution_A_e_0} \\
    \frac{dA}{ds} (s) &= \lambda \sqrt{1 + U_{max}^2} \cos{\left(\sqrt{1 + U_{max}^2} s - \phi \right)}, \label{eq: evolution_dA_ds_e_0}
\end{align}
where $\lambda > 0$. As $A (s)$ and $\frac{dA}{ds} (s) = B (s)$ are continuous and $\lambda > 0$ is considered, $A (s) \rightarrow 0$ implies that $\sin{\left(\sqrt{1 + U_{max}^2} s - \phi \right)} \rightarrow 0.$ Therefore, $\lim_{A (s) \rightarrow 0} \frac{dA}{ds} (s) = \pm \lambda \sqrt{1 + U_{max}^2} \neq 0,$ since $\lambda > 0$. Hence, $A (s) = 0$ transiently. As $A (s) < 0 \implies u_g (s) \equiv U_{max}$ and $A (s) > 0 \implies u_g (s) \equiv -U_{max},$ $\left(A (s), \frac{dA (s)}{ds} \right) = (0, \pm \lambda \sqrt{1 + U_{max}^2})$ correspond to inflection points between $L$ and $R$ segments.

The phase portrait for $A (s)$ can be obtained using a function that relates $A (s)$ and $\frac{dA (s)}{ds}$, which is given by
\begin{align} \label{eq: phase_portrait_e_0}
    f \left(A (s), \frac{dA (s)}{ds} \right) := A^2 (s) + \frac{1}{1 + U_{max}^2} \left(\frac{dA (s)}{ds} \right)^2 = \lambda^2.
\end{align}
Using the above-defined function, which is satisfied by the obtained solutions for $A (s)$ and $\frac{dA}{ds} (s),$ the phase portrait obtained for $e = 0$ is shown in Fig.~\ref{fig: phase_portrait_A_e_0}.

\begin{figure}[htb!]
    \centering
    \includegraphics[width = 0.6\linewidth]{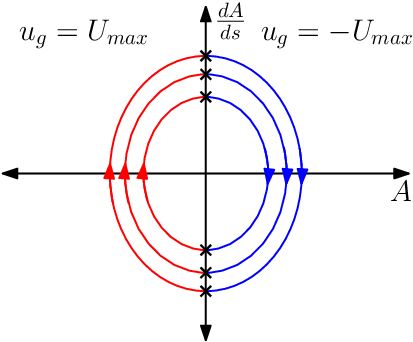}
    \caption{Phase portrait of $A$ for $e = 0$}
    \label{fig: phase_portrait_A_e_0}
\end{figure}

\begin{proposition} \label{prop: optimal_path_e_0}
For $e = 0,$ the optimal path is a concatenation of $C$ segments.
\end{proposition}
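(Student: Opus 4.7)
The plan is to combine Lemma~\ref{lemma: control_actions} with the phase portrait of $A$ established immediately before the proposition. Fixing $e = 0$, Lemma~\ref{lemma: control_actions} already says that wherever $A(s) \neq 0$ the control is $u_g = -U_{max}\,\text{sgn}(A) \in \{-U_{max}, U_{max}\}$, so the trajectory is an $L$ or $R$ arc on such intervals. The whole task therefore reduces to ruling out any subinterval of the optimal path on which $u_g = 0$, which under $e = 0$ necessarily forces $A \equiv 0$ on that subinterval (otherwise Lemma~\ref{lemma: control_actions} would demand $u_g = \pm U_{max}$).

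First I would show that a $G$-piece cannot be glued continuously to a neighboring $C$-piece. On any maximal interval where $A \not\equiv 0$, the solution \eqref{eq: evolution_A_e_0}--\eqref{eq: evolution_dA_ds_e_0} has $\lambda > 0$, so $(A, B)$ lies on an ellipse that strictly encloses (and hence avoids) the origin; in particular at every zero of $A$ one has $B = \pm \lambda\sqrt{1+U_{max}^2} \neq 0$, exactly as noted in the transversality discussion following \eqref{eq: evolution_dA_ds_e_0}. On a $G$-piece, where $A \equiv 0$, one instead has $B = \tfrac{dA}{ds} \equiv 0$. Since $A$ and $B$ are continuous in $s$, these two regimes cannot meet at a common endpoint, so no $G$-piece can be inserted between two $C$-pieces nor attached to one at the initial or terminal time.

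The remaining case is that $A \equiv 0$ on the entire optimal trajectory. The argument in the proof of Lemma~\ref{lemma: control_actions} immediately gives $B \equiv 0$ and $C = -e = 0$, so $A \equiv B \equiv C \equiv 0$. The hard part will be closing this case: the plan is to combine these three identities with the free-terminal transversality conditions $\lambda_2(L) = \lambda_3(L) = 0$ and the adjoint ODEs for $\lambda_1, \lambda_2, \lambda_3$ along the rotating frame $\{{\mathbf X}, {\mathbf T}, {\mathbf N}\}$ to conclude that $\lambda_1 \equiv 0$ as well, which together with $e = 0$ contradicts the non-triviality condition of PMP. This step needs care because the pointwise scalar relations $A = B = C = 0$ only remove three degrees of freedom from the nine-dimensional adjoint, so one must propagate the vanishing using the dynamics and the boundary data at $s = L$ to collapse the remaining six.

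Once this degenerate case is eliminated, every optimal $e = 0$ trajectory must take $u_g \in \{-U_{max}, U_{max}\}$ throughout, with switches between $L$ and $R$ occurring precisely at the transverse zeros of $A$ read off from the ellipses in Fig.~\ref{fig: phase_portrait_A_e_0}. This is exactly the statement that the optimal path is a concatenation of $C$ segments, establishing the proposition.
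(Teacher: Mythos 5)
Your main-case argument is exactly the paper's (implicit) justification: the paper states this proposition without a proof environment, relying on Lemma~\ref{lemma: control_actions} and the phase-portrait discussion around \eqref{eq: evolution_A_e_0}--\eqref{eq: evolution_dA_ds_e_0}, which is precisely your observation that on any interval where $A\not\equiv 0$ the pair $(A,B)$ lives on an ellipse avoiding the origin, so $B\neq 0$ at every zero of $A$, while a subinterval with $A\equiv 0$ forces $B\equiv 0$; continuity of $(A,B)$ then forbids gluing the two regimes. That part is correct and is essentially the paper's route. (One small refinement: on a subinterval where $A\equiv 0$ and $e=0$, PMP does not force $u_g=0$ --- the Hamiltonian is independent of $u_g$ there, so \emph{any} control is minimizing; what your continuity argument actually rules out is the coexistence of such a subinterval with one where $A\not\equiv 0$, which is what is needed.)

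The genuine gap is in your plan for the remaining case $A\equiv B\equiv C\equiv 0$ with $e=0$. The conclusion you intend to reach --- that the transversality data $\lambda_2(L)=\lambda_3(L)=0$ together with the adjoint ODEs forces $\lambda_1\equiv 0$, contradicting nontriviality --- is false in the ambient $\mathbb{R}^9$ formulation. Take $\lambda_1(s)=c\,\langle \mathbf{X}(L),\mathbf{X}(s)\rangle\,\mathbf{X}(L)$, $\lambda_2(s)=c\,\langle \mathbf{X}(L),\mathbf{T}(s)\rangle\,\mathbf{X}(L)$, $\lambda_3(s)=c\,\langle \mathbf{X}(L),\mathbf{N}(s)\rangle\,\mathbf{X}(L)$ with $c\neq 0$. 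One checks directly that this satisfies the adjoint equations $\lambda_1'=\lambda_2$, $\lambda_2'=-\lambda_1+u_g\lambda_3$, $\lambda_3'=-u_g\lambda_2$ for \emph{every} admissible control, satisfies the transversality conditions (since $\mathbf{X}(L)\perp\mathbf{T}(L),\mathbf{N}(L)$), yields $A\equiv B\equiv C\equiv 0$ and $H\equiv 0$ with $e=0$, and yet has $\lambda_1(L)=c\,\mathbf{X}(L)\neq 0$. So no contradiction with the $\mathbb{R}^{10}$-nontriviality of $(e,\lambda_1,\lambda_2,\lambda_3)$ is available, and the six ``remaining degrees of freedom'' you mention cannot be collapsed. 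The correct way to dispose of this case is to observe that the state space is really $SO(3)$ and the intrinsic (reduced) costate is exactly $\psi=(A,B,C)$, whose norm is conserved by \eqref{eq: adjoint}; the nontriviality condition that carries information is $(e,\psi)\neq 0$, and lifts with $e=0$, $\psi\equiv 0$ (such as the one above) exist for every trajectory and hence impose no necessary condition. This is also why the paper phrases Lemma~\ref{lemma: control_actions} with the hypothesis $A\not\equiv 0$ rather than attempting to exclude the degenerate lift.
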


\begin{lemma} \label{lemma: angle_C_segment_e_0}
    For $e = 0,$ the angle of a $C$ segment that is completely traversed is exactly $\pi$ radians. 
\end{lemma}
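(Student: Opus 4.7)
The plan is to read off the arc length of a completely traversed $C$ segment from the explicit solution for $A(s)$ in equations \eqref{eq: evolution_A_e_0}--\eqref{eq: evolution_dA_ds_e_0}, and then convert that arc length into the angle subtended along the tight turn using the radius $r = 1/\sqrt{1+U_{max}^2}$.

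First, I would nail down what ``completely traversed'' means here: it is a $C$ segment whose endpoints are two consecutive switches of $u_g$. By Lemma~\ref{lemma: control_actions} together with the observation (made just after \eqref{eq: evolution_dA_ds_e_0}) that $A$ can only vanish transiently when $e=0$, switches of $u_g$ between $\pm U_{max}$ occur exactly at transverse zeros of $A$. So a completely traversed $C$ segment is an interval $[s_1,s_2]$ with $A(s_1)=A(s_2)=0$, $A(s)\neq 0$ on $(s_1,s_2)$, and $u_g\equiv\pm U_{max}$ throughout.

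Next, since $u_g^2 = U_{max}^2$ is constant on such a segment, \eqref{eq: evolution_A_e_0} applies with some $\lambda>0$ and phase $\phi$, so $A(s)=\lambda\sin(\sqrt{1+U_{max}^2}\,s-\phi)$. Consecutive zeros of this function are spaced by $\pi/\sqrt{1+U_{max}^2}$, so the arc length of a completely traversed $C$ segment is $s_C=\pi/\sqrt{1+U_{max}^2}$. Finally, I would convert to the angle of the $C$ segment: a tight turn traces a small circle of (Euclidean) radius $r=1/\sqrt{1+U_{max}^2}$, and since the vehicle moves at unit speed, the angle subtended at the axis of this small circle after arc length $s_C$ is $\theta_C=s_C/r=\pi$.

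The only subtlety is making sure the $C$ segment really is ``completely traversed'' in the phase-portrait sense (both endpoints on the $A=0$ axis, producing a full half-cycle of the sinusoid) as opposed to a partial arc that is terminated early by the boundary conditions; this is exactly what the phrasing of the lemma rules out. Beyond that, the argument is a direct calculation, and I do not anticipate any real obstacle.
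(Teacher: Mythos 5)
Your proposal is correct and follows essentially the same route as the paper: both read the half-period of the sinusoidal solution \eqref{eq: evolution_A_e_0} between consecutive zeros of $A$ to get the arc length $\pi/\sqrt{1+U_{max}^2}$, and then divide by the turn radius $r = 1/\sqrt{1+U_{max}^2}$ to obtain the angle $\pi$. The paper phrases the endpoint conditions via the phase portrait values of $\bigl(A, \tfrac{dA}{ds}\bigr)$ rather than via spacing of consecutive zeros, but this is a presentational difference only.
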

\begin{proof}
    Consider an $L$ segment that is completely traversed. Let $s$ denote the arc length of the segment. Hence, from Fig.~\ref{fig: phase_portrait_A_e_0} and \eqref{eq: phase_portrait_e_0}, $A (0) = 0, \frac{dA}{ds} (0) = -\lambda\sqrt{1 + U_{max}^2},$ and $A (s) = 0, \frac{dA}{ds} (s) = \lambda \sqrt{1 + U_{max}^2}.$ Hence, from the evolution of $A$ and $\frac{dA}{ds}$ given in \eqref{eq: evolution_A_e_0} and \eqref{eq: evolution_dA_ds_e_0}, respectively,
    \begin{align*}
        \sin{(-\phi)} &= \sin{\left(\sqrt{1 + U_{max}^2} s - \phi \right)} = 0, \\
        \cos{(-\phi)} &= - \cos{\left(\sqrt{1 + U_{max}^2} s - \phi \right)} = - 1. 
    \end{align*}
    Hence, $\sqrt{1 + U_{max}^2} s = \pi$. However, since the radius of the $L$ segment is $r = \frac{1}{\sqrt{1 + U_{max}^2}},$ the angle of the $C$ segment is equal to $\pi$. A similar proof follows for an $R$ segment.
\end{proof}

Since the problem considers free terminal orientation, $A (L) = 0$. Therefore, using the phase portrait in Fig.~\ref{fig: phase_portrait_A_e_0}, Proposition~\ref{prop: optimal_path_e_0}, and Lemma~\ref{lemma: angle_C_segment_e_0}, it follows that the optimal path is of type $C_\alpha C_\pi C_\pi \cdots C_\pi$, where $\alpha \in (0, \pi]$, since the last $C$ segment must also be completely traversed.

\begin{lemma}
    For $e = 0,$ the optimal path is of type $C_\alpha C_\pi$ for $r \leq \frac{1}{\sqrt{2}}$, where $\alpha < \pi$, or a degenerate path of the same.
\end{lemma}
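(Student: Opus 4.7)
The plan hinges on two ingredients: (i) the structural form $C_\alpha C_\pi C_\pi \cdots C_\pi$ with $\alpha \in (0, \pi]$ derived in the paragraph immediately preceding the lemma, and (ii) the classification of optimal paths for the fixed-terminal-configuration Dubins problem on the sphere from \cite{3D_Dubins_sphere}. The key observation is that, by the principle of optimality, any subpath of an optimal path (for the present free-final-orientation problem) is itself an optimal path for the fixed-configuration problem between its two boundary configurations. Therefore the candidate list from \cite{3D_Dubins_sphere} must contain every such subpath, and the angle constraints derived there transfer to our setting.

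The first step is to rule out three or more $C$ segments. Assuming for contradiction that the optimal path contains three consecutive $C$ segments, extract the corresponding $CCC$ subpath: by the structural form, its middle segment has angle exactly $\pi$. But the classification in \cite{3D_Dubins_sphere} for $r \leq \frac{1}{\sqrt{2}}$ requires the middle segment of any optimal $CCC$ subpath to have angle strictly greater than $\pi$, yielding a contradiction. Hence the optimal path has at most two $C$ segments. The second step is to rule out $\alpha = \pi$ in the resulting two-segment candidate $C_\alpha C_\pi$. If $\alpha = \pi$, the path is $C_\pi C_\pi$; I would show this is suboptimal either by invoking the corresponding two-segment restriction from \cite{3D_Dubins_sphere} (a $CC$ path whose first segment has angle $\pi$ cannot be optimal between its endpoints for $r \leq \frac{1}{\sqrt{2}}$), or, failing that, by constructing a strictly shorter $CG$ path from the same initial configuration to the same terminal position and comparing arc lengths, where the bound $r \leq \frac{1}{\sqrt{2}}$ is used to ensure the comparison goes the right way. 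Either route forces $\alpha < \pi$ in the nondegenerate two-segment case, while the degenerate case reduces to a single $C$ segment as in the lemma statement.

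The main obstacle will be Step 2, specifically isolating from \cite{3D_Dubins_sphere} the precise angle restriction on $CCC$ candidates ($>\pi$ for the middle segment) and verifying that it indeed applies for the entire range $r \leq \frac{1}{\sqrt{2}}$ rather than only $r \leq \frac{1}{2}$. The $\alpha = \pi$ subcase may also require an independent argument since it is a two-segment rather than three-segment structure, and I expect the cleanest route there is a direct length comparison with a $CG$ alternative, using spherical trigonometry and carefully tracking the bound on $r$.
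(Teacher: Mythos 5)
Your Step 2 (second option) is the right idea and is in fact what the paper does, but your Step 1 contains a genuine gap. You propose to exclude three or more $C$ segments by extracting a $CCC$ subpath with middle angle exactly $\pi$ and contradicting the requirement from \cite{3D_Dubins_sphere} that the middle segment of an optimal $CCC$ path exceed $\pi$. That requirement (Lemma~3.3 there, restated as Lemma~\ref{lemma: angle_greater_than_pi_CCC} here) is derived for \emph{normal} extremals: it rests on the inflection-point condition $C(s_i) = -1$, i.e., $e = 1$. The lemma you are proving concerns the abnormal case $e = 0$, where $H \equiv 0$ forces $C(s_i) = 0$ at inflections, and the phase portrait then yields interior segments of angle \emph{exactly} $\pi$. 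The principle of optimality does tell you that a subpath is optimal for the fixed-configuration subproblem, but PMP for that subproblem only guarantees the subpath is an extremal for \emph{some} multiplier, possibly abnormal; the $CGC$/$CCC$ classification in \cite{3D_Dubins_sphere} is explicitly stated only for normal controls, so no contradiction is obtained. In short, you cannot import the strict inequality on the middle angle into the $e=0$ setting, and your exclusion of $C_\alpha C_\pi C_\pi \cdots C_\pi$ with three or more segments does not go through as written.

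The paper closes both cases with a single argument, which is essentially your fallback for Step 2: any path of the form $C_\alpha C_\pi C_\pi \cdots C_\pi$ with at least two full $C_\pi$ segments, and likewise the two-segment path with $\alpha = \pi$, terminates in a $C_\pi C_\pi$ block ending at the final location. Because the terminal orientation is free, this block may be replaced by any path matching only the final position, and the paper explicitly solves $\mathbf{R}_L(r,\pi)\mathbf{R}_R(r,\pi)e_1 = \mathbf{R}_L(r,\phi_1)\mathbf{R}_G(\phi_2)e_1$ to produce a $C_{\phi_1}G_{\phi_2}$ replacement with $c\phi_2 = 1 - 4r^2$ and $\phi_1 = \frac{\pi}{2} + \gamma$, then proves $\Delta l(r) = 2\pi r - r\phi_1(r) - \phi_2(r) > 0$ for $0 < r \leq \frac{1}{\sqrt{2}}$ by a monotonicity argument on $(\Delta l)'$. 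This length comparison is the entire substance of the proof and is nontrivial (it requires verifying the third component equation and bounding $\tan^{-1}\left(\frac{h_1}{2\sqrt{2}}\right)$ and $h_2$ separately); your proposal leaves it as a sketch. If you replace Step 1 by the observation that the terminal block of any offending path is $C_\pi C_\pi$ and apply the same replacement there, the argument becomes correct and coincides with the paper's.
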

\begin{proof}
    Consider a $L_\pi R_\pi$ path. It is claimed that there exists an alternate $LG$ path of lower length connecting the same initial configuration and final location as the considered $L_\pi R_\pi$ path for $r \leq \frac{1}{\sqrt{2}}$. To this end, the alternate path must satisfy
    \begin{align*}
        \mathbf{R}_L (r, \pi) \mathbf{R}_R (r, \pi) e_1 = \mathbf{R}_L (r, \phi_1) \mathbf{R}_G (\phi_2) e_1,
    \end{align*}
    where $\mathbf{R}_L, \mathbf{R}_R, \mathbf{R}_G$ denote the rotation matrices corresponding to the $L, R,$ and $G$ segments, respectively. Using the expressions derived for these rotation matrices in Appendix~\ref{app: derivation_rot_mat}, the above equation can be simplified as
    \begin{align} \label{eq: equation_CpiCpi}
    \begin{split}
        &\begin{pmatrix}
            1 + 8 r^4 - 8 r^2 \\
            0 \\
            4 \left(1 - 2 r^2 \right) r \sqrt{1 - r^2}
        \end{pmatrix} \\
        &= \begin{pmatrix}
            c \phi_2 \left(1 - (1 - c \phi_1) r^2 \right) - r s \phi_1 s \phi_2 \\
            r s \phi_1 c \phi_2 + c \phi_1 s \phi_2 \\
            (1 - c \phi_1) r \sqrt{1 - r^2} c \phi_2 + s \phi_1 s \phi_2 \sqrt{1 - r^2}
        \end{pmatrix}.
    \end{split}
    \end{align}
    From the third equation,
    \begin{align}
        r (1 - c \phi_1) c \phi_2 + s \phi_1 s \phi_2 = 4 (1 - 2 r^2) r. \label{eq: third_equation_CpiCpi}
    \end{align}
    Substituting the above equation in the first equation in \eqref{eq: equation_CpiCpi} and simplifying, $c \phi_2 = 1 - 4 r^2$.
    Hence, for $r \leq \frac{1}{\sqrt{2}},$ a solution for $\phi_2$ can be obtained. From the two possible solutions, the solution $\phi_2 = \cos^{-1} \left(1 - 4 r^2 \right) \in [0, \pi]$ for $r \leq \frac{1}{\sqrt{2}}$ is chosen.

    Dividing the second equation in \eqref{eq: equation_CpiCpi} by $\sqrt{r^2 c^2 \phi_2 + s^2 \phi_2}$ for $0 < r \leq \frac{1}{\sqrt{2}}$, and defining $s \gamma := \frac{r c \phi_2}{\sqrt{r^2 c^2 \phi_2 + s^2 \phi_2}}, c \gamma := \frac{s \phi_2}{\sqrt{r^2 c^2 \phi_2 + s^2 \phi_2}}$, the equation can be rewritten as
    \begin{align*}
        s \gamma s \phi_1 + c \gamma c \phi_1 = \cos{\left(\phi_1 - \gamma \right)} &= 0.
    \end{align*}
    Among the two possible solutions, $\phi_1 = \frac{\pi}{2} + \gamma$ is chosen. It is claimed that $\gamma \in \left[\frac{-\pi}{2}, \tan^{-1} \left(\frac{1}{2 \sqrt{2}} \right) \right)$ for $0 < r \leq \frac{1}{\sqrt{2}}$, which in turn implies that $\phi_1 \geq 0$. 
    For this purpose, the expression for $\sqrt{r^2 c^2 \phi_2 + s^2 \phi_2}$ is first obtained by substituting the expression for $\phi_2$ as $\sqrt{r^2 c^2 \phi_2 + s^2 \phi_2} = \sqrt{r^2 (4 r^2 - 3)^2} = r (3 - 4 r^2)$,
    since $0 < r \leq \frac{1}{\sqrt{2}}$ is considered. Hence,
    \begin{align}
        s \gamma &= \frac{r c \phi_2}{\sqrt{r^2 c^2 \phi_2 + s^2 \phi_2}} = \frac{1 - 4 r^2}{3 - 4 r^2}, \label{eq: sin_gamma} \\
        c \gamma &= \frac{s \phi_2}{\sqrt{r^2 c^2 \phi_2 + s^2 \phi_2}} = \frac{2 \sqrt{2} \sqrt{1 - 2 r^2}}{3 - 4 r^2}. \label{eq: cos_gamma}
    \end{align}
    Since the expression for $s \gamma$ in terms of $r$ is smooth for $0 < r \leq \frac{1}{\sqrt{2}},$ $\frac{d s \gamma}{d r}$ is obtained as $\frac{-16r}{(3 - 4 r^2)^2} \leq 0$. Hence, the supremum of $s\gamma$ is at $r \rightarrow 0^+,$ and the minimum is at $r = \frac{1}{\sqrt{2}}.$ Hence, the claim follows by evaluating $s \gamma$ at the respective values for $r$.

    Since $\phi_2$ was obtained using the first equation in \eqref{eq: equation_CpiCpi} and $\phi_1$ was obtained using the second equation, only \eqref{eq: third_equation_CpiCpi} remains to be verified for the obtained solution. Substituting the obtained expressions in the LHS of \eqref{eq: third_equation_CpiCpi}, noting that $c \phi_1 = -s \gamma, s \phi_1 = c \gamma$, and using the expressions for $s \gamma$ and $c \gamma$ in terms of $\phi_2$,
    \begin{align*}
        & r\left(1 + s \gamma \right) c \phi_2 + c \gamma s \phi_2 \\
        &= r c \phi_2 + \frac{r^2 c^2 \phi_2}{\sqrt{r^2 c^2 \phi_2 + s^2 \phi_2}} + \frac{s^2 \phi_2}{\sqrt{r^2 c^2 \phi_2 + s^2 \phi_2}} \\
        &= r (1 - 4 r^2) + \sqrt{r^2 c^2 \phi_2 + s^2 \phi_2} \\
        &= r (1 - 4 r^2) + r (3 - 4 r^2) = 4 (1 - 2 r^2) r.
    \end{align*}
    Hence, \eqref{eq: third_equation_CpiCpi} is verified. Therefore, the alternate $LG$ path connects the same initial configuration and final location as the initial path. Hence, it remains to be shown that the alternate path is of a lower length than the initial path.

    The length difference between the two paths is given by
    \begin{align*}
        \Delta l (r) &= l_{L_\pi R_\pi} - l_{L_{\phi_1} G_{\phi_2}} = 2 r \pi - r \phi_1 (r) - \phi_2 (r).
    \end{align*}
    It should be noted that, $\Delta l (0) = 0,$ since $\phi_2 = 0$ and \eqref{eq: equation_CpiCpi} is trivially satisfied for any $\phi_1$. Therefore, it suffices to show that $\Delta l(r)$ is an increasing function on $r \in \left[0, \frac{1}{\sqrt{2}} \right]$. 
    
    It should be recalled that the closed-form solution obtained for $\phi_1$ is valid for $r \in \left(0, \frac{1}{\sqrt{2}} \right]$, since $\gamma$ is defined over this range. To ensure that $\phi_1$ is continuous at $r = 0$, consider the expression for $s \gamma$ and $c \gamma.$ For $r \neq 0,$ $s \gamma$ and $c \gamma$ are obtained as given in \eqref{eq: sin_gamma} and \eqref{eq: cos_gamma}, respectively.
    Defining $s \gamma (r = 0) := \frac{1}{3}, c \gamma (r = 0) = \frac{2 \sqrt{2}}{3},$ the solution $\phi_1 = \frac{\pi}{2} + \gamma$ is extended to be continuous on $r \in \left[0, \frac{1}{\sqrt{2}} \right]$. Further, the above solution satisfies \eqref{eq: equation_CpiCpi} for $r = 0$ since the equation is satisfied independent of $\phi_1$ for $r = 0$. Therefore, using this extended solution of $\phi_1$, $\Delta l (r)$ can be expanded as
    \begin{align*}
        \Delta l (r) &= 2 r \pi - r \left(\frac{\pi}{2} + \tan^{-1} \left(\frac{1 - 4 r^2}{2 \sqrt{2} \sqrt{1 - 2 r^2}} \right) \right) \\
        & \quad\, - \tan^{-1} \left(\frac{2 \sqrt{2} r \sqrt{1 - 2 r^2}}{1 - 4 r^2} \right).
    \end{align*}
    The above expression is differentiable for $r \in \left[0, \frac{1}{2} \right) \bigcup \left(\frac{1}{2}, \frac{1}{\sqrt{2}} \right)$. Further, since $\phi_1 \left(\frac{1}{2} \right) = \frac{\pi}{2}, \phi_1 \left(\frac{1}{\sqrt{2}} \right) = 0, \phi_2 \left(\frac{1}{2} \right) = \frac{\pi}{2}, \phi_2 (\frac{1}{\sqrt{2}}) = \pi,$
    \begin{align*}
        \Delta l \left(\frac{1}{2} \right) &= \pi - \frac{\pi}{4} - \frac{\pi}{2} > 0, \\
        \Delta l \left(\frac{1}{\sqrt{2}} \right) &= \sqrt{2} \pi - 0 - \pi > 0.
    \end{align*}
    Hence, it suffices to show $\left(\Delta l (r) \right)' > 0$ for $r \in \left[0, \frac{1}{2} \right) \bigcup \left(\frac{1}{2}, \frac{1}{\sqrt{2}} \right)$. Differentiating the expression for $\Delta l (r)$ and simplifying,
    \begin{align} \label{eq: expression_delta_lprime}
    \begin{split}
        \left(\Delta l (r) \right)' &= \frac{3 \pi}{2} - \tan^{-1} \left(\frac{1 - 4 r^2}{2 \sqrt{2 (1 - 2 r^2)}} \right) \\
        & \quad\, - \frac{6 \sqrt{2 (1 - 2 r^2)}}{3 - 4 r^2}.
    \end{split}
    \end{align}
    It is claimed that $\left(\Delta l (r) \right)' > \frac{3 \pi}{2} - \tan^{-1} \left(\frac{1}{2 \sqrt{2}} \right) - 3 > 0$ for the considered range of $r,$ which will be shown by obtaining the maximum of $\tan^{-1} \left(\frac{h_1}{2 \sqrt{2}} \right)$ and $h_2.$ Here, $h_1 = \frac{1 - 4 r^2}{\sqrt{1 - 2 r^2}},$ $h_2 = \frac{6 \sqrt{2} \sqrt{1 - 2 r^2}}{3 - 4 r^2}$. Noting that $h_1$ and $h_2$ are differentiable for the considered range of $r$,
    \begin{align*}
        h_1' &= \frac{2 r \left(4 r^2 - 3 \right)}{\left(1 - 2 r^2 \right)^{1.5}}, \quad h_2' = \frac{12 \sqrt{2} r \left(1 - 4 r^2 \right)}{\left(3 - 4 r^2 \right)^2 \sqrt{1 - 2 r^2}}.
    \end{align*}
    It can be observed that $h_1' \leq 0$ for the considered range of $r$. Since $\tan^{-1} \left(\frac{h_1}{2 \sqrt{2}} \right)$ is a strictly increasing function, its maximum is attained at $r = 0,$ which evaluates to $\tan^{-1} \left(\frac{1}{2 \sqrt{2}} \right)$. On the other hand, $h_2' = 0 \implies r = \frac{1}{2}$. Since $h_2$ at $r = 0, \frac{1}{\sqrt{2}}, \frac{1}{2}$ evaluates to $2 \sqrt{2}, 0,$ and $3,$ respectively, its supremum is $3$ over the considered range of $r$. Substituting the obtained values in \eqref{eq: expression_delta_lprime}, the claim follows. Since $\Delta l (r = 0) = 0, (\Delta l (r))' > 0$ for $r \in \left[0, \frac{1}{2} \right) \bigcup \left(\frac{1}{2}, \frac{1}{\sqrt{2}} \right),$ and $\Delta l (r = \frac{1}{2}) > 0,$ $\Delta l (r = \frac{1}{\sqrt{2}}) > 0,$ the alternate $LG$ path is of a lower length than the $L_\pi R_\pi$ path. Using a similar proof for the $R_\pi L_\pi$ path, the lemma is proved.
\end{proof}

\subsection{Candidate paths for $e = 1$}

The candidate paths for $e = 1$ for motion planning on a sphere with given initial and final configurations were obtained to be $CGC,$ $CCC$, and degenerate paths of the same for $r \leq \frac{1}{2}$ in \cite{3D_Dubins_sphere}. Hence, considering such paths for the motion planning problem with free terminal orientation for $r \leq \frac{1}{2}$ suffices. It is claimed that paths of type $GC$ are not optimal, and paths of type $CCC$ are non-optimal for $r \leq \frac{1}{2}$. To show the first claim, Lemmas~3.4 and 3.5 from \cite{3D_Dubins_sphere} are utilized, which are restated below.

\begin{lemma} \label{lemma: G_segment}
    Suppose a great circular arc is part of an optimal path. Then, $\|\psi (s) \|^2$ = $A^2 (s) + B^2 (s) + C^2 (s) = 1$ throughout the path (Lemma~3.4 in \cite{3D_Dubins_sphere}).
\end{lemma}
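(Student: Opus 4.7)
The plan is to prove this by identifying a conserved quantity of the adjoint dynamics and then evaluating it on the $G$ segment where its value can be computed explicitly.

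First, I would show that $\Phi(s) := A^2(s) + B^2(s) + C^2(s)$ is a first integral of the system \eqref{eq: adjoint}. Differentiating and substituting the adjoint equations gives
\begin{align*}
\frac{d\Phi}{ds} = 2A\,B + 2B(-A + u_g C) + 2C(-u_g B) = 0,
\end{align*}
where the $u_g$-dependent terms cancel by the skew-symmetric coupling in \eqref{eq: adjoint}. Hence $\Phi$ is constant along any optimal trajectory, regardless of the control action used on each segment.

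Second, I would pin down the value of $\Phi$ on the $G$ segment. Since a great circular arc corresponds to $u_g \equiv 0$, Lemma~\ref{lemma: control_actions} (applied in the normal case $e=1$, which is the only case in which $u_g = 0$ is admissible) forces $A \equiv 0$ on the segment. Continuity of $A$ and the relation $B = \frac{dA}{ds}$ then give $B \equiv 0$ as well. Using the condition $H \equiv 0$ together with \eqref{eq: Hamiltonian} and $u_g = 0$, one obtains $C = -e = -1$ throughout the $G$ segment. Therefore $\Phi = 0 + 0 + 1 = 1$ on the $G$ portion of the path.

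Combining the two steps, the conservation law from the first step propagates the value $\Phi = 1$ established on the $G$ segment to every point of the optimal path, yielding $\|\psi(s)\|^2 = 1$ everywhere as claimed. I do not expect a serious obstacle here: the only subtlety is justifying the normalization $e = 1$, which is legitimate because the presence of a $G$ segment rules out the abnormal case $e = 0$ (Lemma~\ref{lemma: control_actions} allows $u_g = 0$ only when $e > 0$, and $e$ may then be rescaled to $1$ without loss of generality).
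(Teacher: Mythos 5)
Your argument is correct and matches the intended proof: the paper itself does not prove this lemma (it is imported verbatim as Lemma~3.4 of the cited reference), but your two steps --- showing $A^2+B^2+C^2$ is a first integral of \eqref{eq: adjoint}, and evaluating $A\equiv B\equiv 0$, $C=-e=-1$ on the $G$ arc via the PMP minimization and $H\equiv 0$ --- are exactly the standard argument, and the conservation computation is the same one the paper re-derives inside its $CCC$ lemma. One minor caveat: Lemma~\ref{lemma: control_actions} does not literally say $u_g=0$ requires $e>0$; it is silent when $e=0$ and $A\equiv 0$, in which case $H$ is independent of $u_g$ and one would get $\psi\equiv 0$ rather than $\|\psi\|=1$, so strictly speaking the lemma presupposes the normal case $e=1$ (which is how the paper uses it) rather than your claim that a $G$ segment by itself rules out $e=0$.
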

\begin{lemma} \label{lemma: axial_vector}
    Suppose ${\hat \Omega}$ is a skew-symmetric matrix with ${\bf u}$ being the unit axial vector of ${\hat \Omega}$. Suppose $R = e^{{\hat \Omega} \phi}$ be a proper rotation. If ${\bf w} \ne 0$ is such that $R {\bf w} = {\bf w}$, then either $\phi=0$ or ${\bf u} = {\bf w}$ (or equivalently, ${\hat \Omega} {\bf w} = {\bf 0}$) (Lemma~3.5 in \cite{3D_Dubins_sphere}).
\end{lemma}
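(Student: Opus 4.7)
The plan is to exploit Rodrigues' rotation formula to rewrite $R = e^{\hat{\Omega}\phi}$ in closed form and then read the fixed-point condition off directly. Since $\hat{\Omega}$ is skew-symmetric with unit axial vector $\mathbf{u}$, Rodrigues' formula gives
\[
R = I + \sin(\phi)\,\hat{\Omega} + (1-\cos(\phi))\,\hat{\Omega}^2,
\]
so the condition $R\mathbf{w} = \mathbf{w}$ is equivalent to
\[
\sin(\phi)\,\hat{\Omega}\mathbf{w} \;+\; (1-\cos(\phi))\,\hat{\Omega}^2\mathbf{w} \;=\; \mathbf{0}.
\]

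Next, I would invoke the standard algebra of the axial vector, namely $\hat{\Omega}\mathbf{w} = \mathbf{u}\times\mathbf{w}$ and $\hat{\Omega}^2\mathbf{w} = (\mathbf{u}\cdot\mathbf{w})\mathbf{u} - \mathbf{w}$, the latter because $\|\mathbf{u}\|=1$. Decomposing $\mathbf{w} = \mathbf{w}_\parallel + \mathbf{w}_\perp$ with $\mathbf{w}_\parallel := (\mathbf{u}\cdot\mathbf{w})\mathbf{u}$, the parallel component cancels in the second term, reducing the fixed-point equation to
\[
\sin(\phi)\,(\mathbf{u}\times\mathbf{w}_\perp) \;-\; (1-\cos(\phi))\,\mathbf{w}_\perp \;=\; \mathbf{0}.
\]

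The decisive geometric observation is that $\mathbf{u}\times\mathbf{w}_\perp$ and $\mathbf{w}_\perp$ both lie in the plane orthogonal to $\mathbf{u}$ and are mutually orthogonal, so if $\mathbf{w}_\perp \neq \mathbf{0}$ they are linearly independent. The two scalar coefficients must then vanish separately, i.e., $\sin\phi = 0$ and $1-\cos\phi = 0$, which together force $\phi = 0$ (mod $2\pi$). Contrapositively, if $\phi \neq 0$ then $\mathbf{w}_\perp = \mathbf{0}$, so $\mathbf{w}$ is a scalar multiple of $\mathbf{u}$ and $\hat{\Omega}\mathbf{w} = \mathbf{u}\times\mathbf{w} = \mathbf{0}$, which is the equivalent formulation in the parenthetical.

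I expect the main subtlety to be this final linear-independence step in $\mathbf{u}^\perp$; once Rodrigues' expansion is invoked, the identities for $\hat{\Omega}\mathbf{w}$ and $\hat{\Omega}^2\mathbf{w}$ reduce everything to clean two-dimensional vector algebra, and the trigonometric step $\sin\phi = 1-\cos\phi = 0 \Rightarrow \phi \equiv 0$ is routine.
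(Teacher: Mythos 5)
Your proof is correct. Note that the paper itself gives no proof of this lemma---it is imported verbatim as Lemma~3.5 of the cited reference \cite{3D_Dubins_sphere}---so there is nothing to compare against here; your Rodrigues-formula argument (expand $R = I + \sin(\phi)\,\hat{\Omega} + (1-\cos(\phi))\,\hat{\Omega}^2$, use $\hat{\Omega}\mathbf{w} = \mathbf{u}\times\mathbf{w}$ and $\hat{\Omega}^2\mathbf{w} = (\mathbf{u}\cdot\mathbf{w})\mathbf{u}-\mathbf{w}$, and conclude from the linear independence of $\mathbf{w}_\perp$ and $\mathbf{u}\times\mathbf{w}_\perp$) is the standard and expected route, and every step checks out. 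The only cosmetic mismatch with the statement is that you correctly obtain $\phi \equiv 0 \pmod{2\pi}$ and ``$\mathbf{w}$ is a scalar multiple of $\mathbf{u}$'' rather than the literal ``$\phi = 0$'' and ``$\mathbf{u}=\mathbf{w}$'' of the lemma, but you rightly point to the parenthetical $\hat{\Omega}\mathbf{w}=\mathbf{0}$ as the form actually used, and that is exactly what your argument delivers.
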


\begin{lemma}
    For $e = 1,$ a path of type $GC$ is non-optimal.
\end{lemma}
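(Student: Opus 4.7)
The plan is to exploit the free-terminal-orientation condition $A(L) = 0$ together with the very rigid values that the adjoint triple $(A, B, C)$ is forced to take on a $G$ segment. I expect to show that a $C$ segment appended to $G$ is compelled to be a complete $2\pi$ loop on its tight circle, and is therefore removable, producing a strictly shorter feasible path.

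First, I would pin down the adjoint state on the $G$ segment. Since $e = 1 > 0$, Lemma~\ref{lemma: control_actions} says $u_g = 0$ on an interval forces $A \equiv 0$ there, so $B = dA/ds \equiv 0$ as well; then $H \equiv e + C + u_g A = 0$ gives $C \equiv -1$ throughout $G$. By continuity of $A$, $B$, $C$ at the junction with the trailing $C$ segment, that segment inherits initial data $A = 0,\ B = 0,\ C = -1$.

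Next, on the terminal $C$ segment with $u_g = \sigma U_{max}$ and $\sigma \in \{-1, +1\}$, I would substitute $C = -1 - u_g A$ (from $H \equiv 0$) into $dB/ds = -A + u_g C$ to obtain
\begin{equation*}
\ddot A(s) + (1 + U_{max}^2)\, A(s) \;=\; -\sigma U_{max}.
\end{equation*}
With the inherited zero initial data, the unique solution is
\begin{equation*}
A(s) \;=\; -\sigma U_{max}\, r^2\, \bigl(1 - \cos(\omega s)\bigr), \qquad \omega := \sqrt{1 + U_{max}^2} = 1/r.
\end{equation*}
Imposing the free-terminal-orientation condition $A(L) = 0$ at the end of the $C$ segment then forces $\cos(\omega L_c) = 1$, i.e., $\omega L_c = 2 k \pi$ for some positive integer $k$, where $L_c$ denotes the arc length of the terminal $C$.

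Finally, I would turn this into a geometric contradiction with optimality. The arc angle of the tight turn on its own circle is $L_c / r = \omega L_c = 2 k \pi$, i.e., $k$ complete revolutions, which return the vehicle to the exact configuration it had at the junction. The truncated path consisting of the $G$ segment alone therefore reaches the same terminal location $\mathbf{X}_f$, is trivially feasible (since $A \equiv 0$ on $G$ automatically satisfies the terminal condition on $A$), and has length strictly smaller by $2 k \pi r > 0$. This contradicts optimality of the $GC$ path. The only non-routine step is recognizing that the zero initial data on $(A, B)$ forced by the preceding $G$ segment, together with the terminal condition on $A$, leave no room for a partial terminal $C$ — it must be a redundant integer number of full loops; the ODE integration and the length comparison are straightforward.
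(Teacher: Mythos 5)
Your proof is correct, but it takes a genuinely different route from the paper's. The paper establishes $\psi(s_1) = \psi(L) = (0,0,-1)^T$ (using $A=0$, $C=-1$ at both points and Lemma~\ref{lemma: G_segment} to get $B=0$), writes $\psi(L) = e^{\hat{\Omega}\phi}\psi(s_1)$ as in \eqref{eq: solution_psi}, and invokes the axial-vector rigidity result (Lemma~\ref{lemma: axial_vector}) to conclude $\phi = 0$ or $U_{max}=0$. You instead derive the junction data $A=B=0$, $C=-1$ directly from the pointwise minimization in Lemma~\ref{lemma: control_actions} and $H\equiv 0$ (so you do not need Lemma~\ref{lemma: G_segment} at all), integrate the forced oscillator $\ddot A + (1+U_{max}^2)A = -\sigma U_{max}$ in closed form, and read off from $A(L)=0$ that the turn angle must be $2k\pi$, which you then excise geometrically since $\mathbf{R}_C(r,2\pi)=I$. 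Your approach is more elementary and self-contained, and it buys something the paper's argument glosses over: Lemma~\ref{lemma: axial_vector} as stated concludes ``$\phi=0$ or $\mathbf{u}=\mathbf{w}$,'' which is literally false for $\phi = 2k\pi$ (where $e^{\hat\Omega\phi}=I$ fixes every vector), so the paper implicitly assumes segment angles lie in $(0,2\pi)$; your explicit excision of full loops closes exactly that case. The paper's approach, in exchange, reuses the $\psi$-exponential machinery uniformly with its $CCC$ analysis and avoids solving any ODE. One cosmetic point: you should state up front that the $GC$ path is assumed non-trivial so that $k\geq 1$; with that, the length reduction $2k\pi r > 0$ is strict and the contradiction is complete.
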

\begin{proof}
    Suppose a non-trivial $GC$ is optimal. Then, at the inflection point with arc length $s_1,$ $A (s_1) = 0$. Due to free terminal orientation, $A (L) = 0$. Since $H = e + C + u_g A \equiv 0$ and $C$ is continuous, $C (s_1) = C (L) = -e = -1$. From Lemma~\ref{lemma: G_segment}, $B (s_1) = B (L) = 0.$ Now, consider the differential equations corresponding to $A, B, C$ given in \eqref{eq: adjoint}, which can be written as
    \begin{align} \label{eq: evolution_psi}
        \psi' (s) = \begin{pmatrix}
            \frac{dA}{ds} (s) \\
            \frac{dB}{ds} (s) \\
            \frac{dC}{ds} (s)
        \end{pmatrix} = \underbrace{\begin{pmatrix}
            0 & 1 & 0 \\
            -1 & 0 & u_g \\
            0 & -u_g & 0
        \end{pmatrix}}_{\Omega} \underbrace{\begin{pmatrix}
            A (s) \\
            B (s) \\
            C (s)
        \end{pmatrix}}_{\psi (s)}.
    \end{align}
    For $s \in (s_1, L),$ $u_g = \pm U_{max}$ (depending on whether $C = L$ or $R$). Therefore, $\Omega$ is a constant skew-symmetric matrix. Defining $\hat{\Omega} = \frac{1}{\sqrt{1 + U_{max}^2}} \Omega$ and $\phi = \sqrt{1 + U_{max}^2} (s - s_1),$ where $\phi$ denotes the angle of the $C$ segment, the solution for $\psi$ can be obtained as
    \begin{align} \label{eq: solution_psi}
        \psi (s) = e^{\hat{\Omega} \phi} \psi (s_1).
    \end{align}
    Since $\psi (s_1) = \psi (L) = (0, 0, -1)^T$, from Lemma~\ref{lemma: G_segment}, $\phi = 0$ or $\psi (s_1)$ is an axial vector for $\hat{\Omega}$. However, the former implies that the $GC$ path is trivial, and the latter implies that $|u_g| = U_{max} = 0$, which cannot occur since $U_{max} \neq 0$. Hence, the $GC$ path is not optimal.
\end{proof}

Having shown that the $GC$ path is not optimal for the free terminal orientation problem (independent of $r$), it is now desired to show the non-optimality of a $CCC$ path for $r \leq \frac{1}{2}$. To this end, Lemma~3.3 from \cite{3D_Dubins_sphere} is utilized, which is restated below.

\begin{lemma} \label{lemma: angle_greater_than_pi_CCC}
     If inflection occurs at $s=s_i$ on the optimal path, then $C(s_i) = -1$ and $A(s_i) = 0$. Furthermore, if $s_1, s_2$ are consecutive inflection points corresponding to a CCC path, then $s_2-s_1 > \pi r$, i.e., the middle segment must have a length greater than $\pi r$. (Lemma~3.3 in \cite{3D_Dubins_sphere}). 
\end{lemma}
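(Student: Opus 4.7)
The plan is to split the argument into two parts, matching the two claims. For the first part, I would note that an inflection on the optimal path is, by Lemma~\ref{lemma: control_actions}, a point where $u_g$ switches between $+U_{max}$ and $-U_{max}$, which forces the continuous function $A$ to cross zero; hence $A(s_i) = 0$. The value $C(s_i) = -1$ then falls out of the Hamiltonian identity $H = e + C + u_g A \equiv 0$ evaluated at $s = s_i$, using $e = 1$ and $A(s_i) = 0$.

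For the length bound on the middle segment, the key observation is that on a single $C$ segment (where $u_g$ is constant), the quantity $C + u_g A$ is a first integral of \eqref{eq: adjoint}, since $\frac{dC}{ds} = -u_g B = -u_g \frac{dA}{ds}$. Combining $C + u_g A \equiv -1$ (from the boundary values $A(s_1) = 0$, $C(s_1) = -1$) with $\frac{d^2 A}{ds^2} = \frac{dB}{ds} = -A + u_g C$ yields the forced harmonic oscillator
\begin{equation*}
    \frac{d^2 A}{ds^2} + \omega^2 A = -u_g, \qquad \omega := \sqrt{1 + U_{max}^2} = \frac{1}{r}.
\end{equation*}
Integrating with $A(s_1) = 0$ and $A'(s_1) = B(s_1)$ gives a closed form for $A(s)$, and imposing the second inflection condition $A(s_2) = 0$ collapses, after a half-angle manipulation, to
\begin{equation*}
    \sin(\phi/2)\left[\frac{B(s_1)}{\omega}\cos(\phi/2) - u_g r^2 \sin(\phi/2)\right] = 0, \quad \phi := \omega(s_2 - s_1).
\end{equation*}

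The conclusion then rests on a sign argument. Lemma~\ref{lemma: control_actions} gives $\mathrm{sgn}(A) = -\mathrm{sgn}(u_g)$ on each $C$ segment; since $A$ reverses sign at $s_1$ between the outer and middle segments, $B(s_1) = A'(s_1)$ is nonzero and its sign is $-\mathrm{sgn}(u_g)$ as one enters the middle segment. Hence $B(s_1)/u_g < 0$, which rules out the spurious factor $\sin(\phi/2) = 0$ and leaves the binding equation $\tan(\phi/2) = B(s_1)/(u_g r) < 0$ (using $r^2\omega = r$). Since the smallest positive $\phi$ for which $\tan(\phi/2)$ is negative lies in $(\pi, 2\pi)$, we obtain $\phi > \pi$, i.e., $s_2 - s_1 > \pi r$, as claimed.

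The main obstacle I anticipate is the sign bookkeeping in this last step --- verifying that $B(s_1)/u_g < 0$ holds uniformly across both the $LRL$ and $RLR$ orderings, and cleanly excluding the $\sin(\phi/2) = 0$ branch that would formally permit a degenerate middle segment of angle $0$ or a full $2\pi$ turn. The genuine sign change of $A$ at an inflection (forcing $B(s_1) \neq 0$) is what disposes of this alternative.
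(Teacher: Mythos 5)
The paper itself offers no proof of this lemma: it is imported verbatim as Lemma~3.3 of \cite{3D_Dubins_sphere}, so there is no in-paper argument to compare against. Judged on its own merits, your proof is essentially sound and takes the natural route. The first part is correct: continuity of $A$ together with the bang-bang law $u_g = -U_{max}\,\mathrm{sgn}(A)$ forces $A(s_i)=0$ at a switch between $+U_{max}$ and $-U_{max}$, and $H\equiv 0$ with $e=1$ then gives $C(s_i)=-1$. The reduction of the middle arc to the forced oscillator $A'' + \omega^2 A = -u_g$ via the segment-wise first integral $C + u_g A = -1$, the half-angle factorization, and the observation that $\tan(\phi/2) = B(s_1)/(u_g r) < 0$ forces $\phi>\pi$ are all correct.

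The one inference that does not hold as stated is ``$A$ reverses sign at $s_1$, hence $B(s_1)\neq 0$.'' Because the forcing term $-u_g$ jumps across the inflection, the one-sided second derivatives $A''(s_1^{\pm}) = -u_g(s_1^{\pm})$ have opposite signs when $A(s_1)=B(s_1)=0$, so $A$ can change sign quadratically through a critical zero; the sign reversal alone does not exclude $B(s_1)=0$, and this configuration is fully consistent with the control law on both adjacent segments. Fortunately the gap costs you nothing: if $B(s_1)=0$, your switching equation degenerates to $\sin^2(\phi/2)=0$, whose smallest root with a nontrivial middle segment is $\phi = 2\pi > \pi$, so the conclusion $s_2-s_1>\pi r$ survives on that branch as well. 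You should simply carry $B(s_1)=0$ as a separate (harmless) case rather than claim to have excluded it.
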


\begin{lemma}
    For $e = 1,$ an optimal non-trivial $CCC$ path is of type $C_\alpha C_{\pi + \beta} C_{\pi + \beta},$ where $\alpha, \beta > 0$.   
\end{lemma}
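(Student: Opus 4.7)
The plan is to mirror the approach from the preceding non-optimality proof for $GC$: examine the evolution of $\psi = (A, B, C)^\top$ across the last two $C$ segments, and exploit the free-terminal condition $A(L)=0$ to force the last segment to have the same angular extent as the middle one. Label the inflection points $s_1 < s_2$ and the three segment angles $\alpha, \phi_2, \phi_3$, with corresponding curvatures $u_1, u_2, u_3$ alternating in sign. Non-triviality of the $CCC$ path gives $\alpha > 0$, and Lemma~\ref{lemma: angle_greater_than_pi_CCC} gives $\phi_2 > \pi$; writing $\phi_2 = \pi + \beta$ immediately yields $\beta > 0$. What remains is to prove $\phi_3 = \phi_2$.

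The first observation is that $s = L$ behaves as an additional inflection point for $\psi$: free terminal orientation gives $A(L)=0$, and $H \equiv 0$ with $e=1$ then forces $C(L)=-1$. Because $\|\psi\|^2$ is conserved under the skew-symmetric flow $\psi' = \Omega\psi$, this yields $|B(L)| = |B(s_2)| = |B(s_1)|$. On the middle segment, the dynamics integrate to $\psi(s_2) = e^{\hat\Omega_2 \phi_2}\psi(s_1)$, with $\hat\Omega_2$ having unit axial vector $(-u_2, 0, -1)/\sqrt{1+U_{max}^2}$. Expanding via Rodrigues' formula applied to $\psi(s_1) = (0, B(s_1), -1)^\top$ and imposing $A(s_2)=0$ collapses, after dividing by $1+\cos\phi_2$, to the tangent identity $\tan(\phi_2/2) = B(s_1)\sqrt{1+U_{max}^2}/u_2$; the same expansion returns $B(s_2) = -B(s_1)$, with the alternative $B(s_2) = B(s_1)$ ruled out by Lemma~\ref{lemma: axial_vector} (it would force either $\phi_2 = 0$ or $u_2 = 0$). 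The accompanying $C(s_2) = -1$ equation reduces to the identity $1 - (1+U_{max}^2) + u_2^2 = 0$ and therefore imposes nothing further.

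Running the identical procedure on the last segment, starting from $\psi(s_2) = (0, -B(s_1), -1)^\top$ with $u_3 = -u_2$ and imposing the free-terminal constraint $A(L) = 0$, gives
\begin{align*}
\tan(\phi_3/2) \;=\; \frac{B(s_2)\sqrt{1+U_{max}^2}}{u_3} \;=\; \frac{-B(s_1)\sqrt{1+U_{max}^2}}{-u_2} \;=\; \tan(\phi_2/2).
\end{align*}
Any single $C$ segment with angle $\geq 2\pi$ is trivially shortenable by removing a full loop, and $\phi = \pi$ is incompatible with the above tangent identity (it would require $u_g = 0$); hence both $\phi_2$ and $\phi_3$ lie in $(0, 2\pi) \setminus \{\pi\}$, a range on which $\phi \mapsto \tan(\phi/2)$ is injective. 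Therefore $\phi_3 = \phi_2 = \pi + \beta$, as claimed.

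The main obstacle I anticipate is the bookkeeping in the Rodrigues expansion: confirming that the apparently independent $C(L) = -1$ and $\|\psi(L)\|^2 = \|\psi(s_2)\|^2$ conditions indeed collapse to identities rather than introducing a further restriction that could distort $\phi_3$, and cleanly excluding the degenerate axial-vector branches via Lemma~\ref{lemma: axial_vector} at both $s_2$ and $L$. Once this is done, the remaining injectivity argument on the interval $(0, 2\pi) \setminus \{\pi\}$ is straightforward.
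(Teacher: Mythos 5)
Your proof is correct and follows essentially the same route as the paper's: treat $s=L$ as an additional inflection point via $A(L)=0$ and $H\equiv 0$, use conservation of $\|\psi\|^2$ together with the axial-vector lemma to conclude $B(s_1)=-B(s_2)=B(L)\neq 0$, and then compare the matrix exponentials across the last two segments. The only substantive difference is in the final algebra: where the paper assembles $(e^{\hat{\Omega}_1 \phi_2}-e^{-\hat{\Omega}_2 \phi_3})\psi(s_1)=0$ into a $2\times 2$ system forcing $\cos\phi_2=\cos\phi_3$ and $\sin\phi_2=\sin\phi_3$, you extract the per-segment half-angle identity $\tan(\phi/2)=B\sqrt{1+U_{max}^2}/u_g$ from the $A=0$ condition and equate the two instances --- which checks out and reaches the same conclusion once the angles are confined to $(0,2\pi)\setminus\{\pi\}$, a restriction you justify and that the paper leaves implicit.
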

\begin{proof}
    Using Lemma~\ref{lemma: angle_greater_than_pi_CCC}, it follows that the angle of the middle $C$ segment is greater than $\pi.$ Hence, it suffices to show that the last $C$ segment has an angle equal to the middle $C$ segment. The proof for the same follows a similar proof to Lemma~3.7 in \cite{3D_Dubins_sphere}, and an outline of the proof is provided here. Suppose the considered path is of type $C_\alpha C_{\phi_2} C_{\phi_3},$ where $\phi_2 > \pi$. Let the inflection points of this path correspond to arc lengths $s_1$ and $s_2$. Then, $A (s_1) = A (s_2) = 0.$ Further, $A (L) = 0$ due to free terminal orientation. Since $H \equiv 0,$ $C (s_1) = C (s_2) = C (L) = -1$ from \eqref{eq: Hamiltonian} and continuity of $C$. Let $B (s_1) = B_0.$ It is claimed that $B_0 \neq 0$ and $B (s_2) = - B(L) = - B_0.$

    First, using the evolution of $A, B,$ and $C$ given in \eqref{eq: adjoint}, it is claimed that $\|\psi (s)\|^2 = A^2 (s) + B^2 (s) + C^2 (s)$ is constant over the path. The claim follows by differentiating $\|\psi (s)\|^2$ and using the expressions for $\frac{dA}{ds}, \frac{dB}{ds}, \frac{dC}{ds}$ from \eqref{eq: adjoint}.
    Hence, $|B (s_1)| = |B (s_2)| = |B (L)|$, since $A^2 (s_1) + C^2 (s_1) = A^2 (s_2) + C^2 (s_2) = A^2 (L) + C^2 (L).$ Suppose $B (s_1) = B (s_2) = B_0.$ Then, $\psi (s_1) = \psi (s_2) = (0, B_0, -1)^T.$ Further, using \eqref{eq: solution_psi}, $\psi (s_2) = e^{\hat{\Omega}_1 \phi_2} \psi (s_1),$ where
    \begin{align*}
        \hat{\Omega}_1 = \begin{pmatrix}
            0 & k_z & 0 \\
            -k_z & 0 & k_x \\
            0 & -k_x & 0
        \end{pmatrix}.
    \end{align*}
    Here, $k_z = r$ and $k_x = \pm \sqrt{1 - r^2}$. From Lemma~\ref{lemma: axial_vector}, it follows that either $(0, B_0, -1)^T$ is an axial vector for $\hat{\Omega}_1$ or $\phi_2 = 0$. The former is not possible since the axial vector for $\hat{\Omega}_1$ is $(\pm \sqrt{1 - r^2}, 0, r)^T,$ and the latter is not possible since the $CCC$ path is not trivial. Therefore, $B (s_1) = - B (s_2) = B_0 \neq 0.$ Using a similar reasoning, $B (L) = - B (s_2) = B_0$, since $\psi (L) = e^{\hat{\Omega}_2 \phi_3} \psi (s_2),$ where
    \begin{align*}
        \hat{\Omega}_2 = \begin{pmatrix}
            0 & k_z & 0 \\
            -k_z & 0 & -k_x \\
            0 & k_x & 0
        \end{pmatrix}.
    \end{align*}
    Since $\psi (L) = e^{\hat{\Omega}_2 \phi_3} \psi (s_2),$ $\psi (s_2) = e^{\hat{\Omega}_1 \phi_2} \psi (s_1),$ and $\psi (s_1) = \psi (L),$ $(e^{\hat{\Omega}_1 \phi_2} - e^{-\hat{\Omega}_2 \phi_3}) \psi (s_1) = 0.$
    Using the Euler-Rodriguez formula for the exponential of $\hat{\Omega}_1$ and $\hat{\Omega}_2$ in the above equation, simplifying, and using the last two equations (refer to Lemma~3.7 in \cite{3D_Dubins_sphere}),
    \begin{align*}
        \begin{pmatrix}
        B_0 & -k_x \\ k_x & B_0 
        \end{pmatrix} \begin{pmatrix}
        \cos \phi_2 - \cos \phi_3 \\ \sin \phi_2 - \sin \phi_3
        \end{pmatrix} = \begin{pmatrix}
        0\\ 0
        \end{pmatrix}.
    \end{align*}
    Since $B_0^2 + k_x^2 \neq 0,$ it follows that $\cos{\phi_2} = \cos{\phi_3}$ and $\sin{\phi_2} = \sin{\phi_3}.$ Hence, $\phi_2 = \phi_3$.
\end{proof}

Hence, an optimal non-trivial $CCC$ path is of type $C_\alpha C_{\pi + \beta} C_{\pi + \beta}$. However, such a path is non-optimal for $r \leq \frac{1}{2}$, which is proved in Lemma~3.8 in \cite{3D_Dubins_sphere}.

\begin{lemma}
    For any $r\in(0,\frac{1}{2}]$, $\alpha>0$, and $\phi \in (0, \pi)$, the path $L_{\alpha}R_{\pi+\phi}L_{\pi+\phi}$ is not optimal; in particular, there is a path of type $RLR$ with smaller length certifying its non-optimality (Lemma~3.8 in \cite{3D_Dubins_sphere}).
\end{lemma}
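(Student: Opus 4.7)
The plan is to construct, for any given $L_\alpha R_{\pi+\phi} L_{\pi+\phi}$ path with $r\in(0,1/2]$, $\alpha>0$, $\phi\in(0,\pi)$, an explicit $RLR$ path that terminates at the same final location on the sphere and has strictly smaller arc length. Since only terminal position (not orientation) is constrained, the construction reduces to a single vector equation
\[
\mathbf{R}_L(r,\alpha)\,\mathbf{R}_R(r,\pi+\phi)\,\mathbf{R}_L(r,\pi+\phi)\,e_1 \;=\; \mathbf{R}_R(r,\alpha_1)\,\mathbf{R}_L(r,\alpha_2)\,\mathbf{R}_R(r,\alpha_3)\,e_1,
\]
which yields a system of three scalar trigonometric equations in the three unknowns $\alpha_1,\alpha_2,\alpha_3\ge 0$, using the explicit rotation matrices of Appendix~\ref{app: derivation_rot_mat}.

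First I would solve this nonlinear system. Guided by the heuristic that both $\pi+\phi$ angles correspond to more than a half-revolution of a tight circle, I would look for an $RLR$ alternative that ``unwinds'' this wrap, expecting $\alpha_2$ to still exceed $\pi$ (as required by Lemma~\ref{lemma: angle_greater_than_pi_CCC} for the middle segment of any $CCC$) but with $\alpha_1+\alpha_2+\alpha_3$ strictly below $\alpha + 2(\pi+\phi)$. The solution strategy parallels the $L_\pi R_\pi \to LG$ lemma: isolate one unknown from a well-chosen component, eliminate another via a phase-shift identity of the form $\cos(\cdot-\eta)=0$, then verify the remaining component as a consistency check.

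Second, I would establish that the length difference
\[
\Delta\ell(r,\alpha,\phi) \;=\; r\bigl(\alpha + 2(\pi+\phi)\bigr) \;-\; r\,(\alpha_1+\alpha_2+\alpha_3)
\]
is strictly positive on the box $r\in(0,1/2]$, $\alpha>0$, $\phi\in(0,\pi)$. The approach would mirror the single-variable treatment of $\Delta l(r)$ in the previous lemma: verify positivity along the boundary (in particular at $r=1/2$ and in the limits $\phi\to 0^+$, $\phi\to\pi^-$), then control the derivatives of the closed-form $\alpha_i(r,\alpha,\phi)$ to force monotonicity or a uniform lower bound. The restriction $r\le 1/2$ enters here both to pin the inverse trigonometric expressions to a single branch and to make the crucial derivative estimates go through.

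The main obstacle is the closed-form reduction of the three-equation rotation system combined with a genuinely multivariate length inequality in $(r,\alpha,\phi)$, far heavier algebraically than the $\Delta l(r)$ argument above. One likely has to split the parameter box (for instance according to the sign of $1-4r^2$, or by comparing $\phi$ to a critical threshold depending on $r$), handle each piece with a tailored monotonicity estimate, and invoke the explicit construction and bounds already worked out in Lemma~3.8 of \cite{3D_Dubins_sphere} to avoid rederiving lengthy trigonometric identities.
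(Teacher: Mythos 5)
The first thing to note is that the paper does not prove this lemma at all: it is imported verbatim as Lemma~3.8 of \cite{3D_Dubins_sphere} (where it was established for the fixed-terminal-configuration problem), and the present paper uses it only via the observation that a path which is non-optimal for prescribed final configurations cannot be optimal when the final orientation is freed. So the ``paper's own proof'' is a citation, nothing more.

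Your proposal, read as a standalone argument, has a genuine gap: the two steps that carry all the mathematical content --- solving the rotation system for $\alpha_1,\alpha_2,\alpha_3$ in closed form, and proving the length inequality uniformly over the three-parameter box $(r,\alpha,\phi)\in(0,\tfrac12]\times(0,\infty)\times(0,\pi)$ --- are not carried out; they are listed as ``main obstacles'' and then discharged by proposing to ``invoke the explicit construction and bounds already worked out in Lemma~3.8 of \cite{3D_Dubins_sphere}.'' That is circular: Lemma~3.8 is precisely the statement to be proved. Either you cite the reference outright (which is what the paper does, and is legitimate), or you must actually produce the $RLR$ construction and the inequality; the sketch as written does neither. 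Two further technical points. First, matching only the terminal \emph{location}, as you propose, gives two independent scalar constraints (the image of $e_1$ is a unit vector), not three, so your ``three equations in three unknowns'' is really an underdetermined $2\times 3$ system; that is not fatal --- it leaves a free parameter one could exploit --- but it changes the structure of the elimination you describe, and it departs from the cited lemma, which matches the full terminal configuration. Second, the length comparison here is genuinely multivariate in $(r,\alpha,\phi)$, and the single-variable monotonicity argument you model it on (the $\Delta l(r)$ computation for $L_\pi R_\pi$ versus $LG$) does not transfer without substantial additional work, e.g.\ showing that $\alpha$ enters both paths identically so that it cancels from $\Delta\ell$, which your sketch implicitly assumes but does not justify.
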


Hence, for $e = 1,$ the optimal paths for $r \leq \frac{1}{2}$ are $LG, RG, LR, RL$, and degenerate paths of the same. Further, since for $e = 0,$ the optimal path was identified as $CC$ and $C$ for $r \leq \frac{1}{\sqrt{2}}$, Theorem~\ref{theorem: optimal_paths} is proved.

\section{Results and Conclusions}

Given the list of candidate paths for $r \leq \frac{1}{2},$ each path must be generated (if it exists) for a given initial configuration, final location, and $r$ (or $U_{max}$). For this purpose, the rotation matrix corresponding to each segment derived in Appendix~\ref{app: derivation_rot_mat} was used to derive closed-form expressions for the angles of the $LG, RG, LR,$ and $RL$ paths. Consider the initial configuration of the vehicle to be the identity matrix. A random final location was generated on the sphere for illustration and is shown in Fig.~\ref{subfig: Initial_final_configurations}. Setting $r = 0.4,$ it was observed that two $LG, RG,$ and $LR$ paths could connect the initial configuration and final location. The best among the two paths for the $LG$ and $RG$ paths, and the optimal $LR$ path candidate (such that the angle of the second segment is $\geq \pi$) are shown in Fig.~\ref{subfig: Paths_obtained}. The $LG$ path was observed to be the least-length path among the three paths.



\begin{figure}[htb!]
    \centering
    \subfigure[Chosen initial configuration and final location]{\includegraphics[width = 0.35\textwidth]{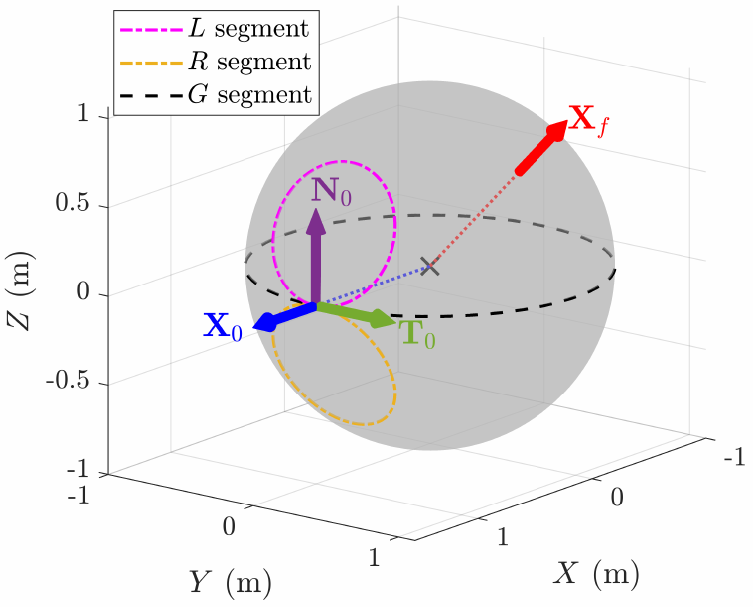}
    \label{subfig: Initial_final_configurations}} \\
    \subfigure[Connecting paths]{\includegraphics[width = 0.35\textwidth]{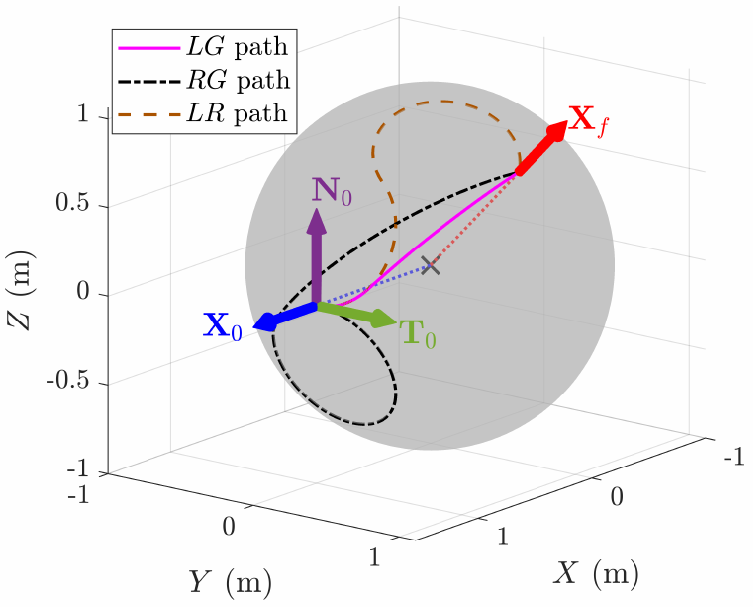}
    \label{subfig: Paths_obtained}}
    \caption{Paths connecting chosen initial configuration of $I$ and final location of $(0.6942, 0.5498, 0.4646)^T$ for $r = 0.4$}
    \label{fig: param_study_teardrop_maneuver}
\end{figure}

In this paper, a motion planning problem on a sphere for a given initial configuration and final location was addressed for a Dubins vehicle. To this end, the vehicle's motion was modeled to be constrained by a maximum geodesic curvature $U_{max},$ which correspondingly constrains the radius of a tight turn on the sphere. Using Pontryagin's Minimum Principle, the optimal paths were shown to be of type $CG, CC, C, G$ for $r \leq 
\frac{1}{2}.$ Future work includes extending the list of candidate paths for $r > \frac{1}{2}.$

\bibliographystyle{IEEEtran}
\bibliography{cite}

\begin{thebibliography}{10}
\providecommand{\url}[1]{#1}
\csname url@samestyle\endcsname
\providecommand{\newblock}{\relax}
\providecommand{\bibinfo}[2]{#2}
\providecommand{\BIBentrySTDinterwordspacing}{\spaceskip=0pt\relax}
\providecommand{\BIBentryALTinterwordstretchfactor}{4}
\providecommand{\BIBentryALTinterwordspacing}{\spaceskip=\fontdimen2\font plus
\BIBentryALTinterwordstretchfactor\fontdimen3\font minus
  \fontdimen4\font\relax}
\providecommand{\BIBforeignlanguage}[2]{{%
\expandafter\ifx\csname l@#1\endcsname\relax
\typeout{** WARNING: IEEEtran.bst: No hyphenation pattern has been}%
\typeout{** loaded for the language `#1'. Using the pattern for}%
\typeout{** the default language instead.}%
\else
\language=\csname l@#1\endcsname
\fi
#2}}
\providecommand{\BIBdecl}{\relax}
\BIBdecl

\bibitem{Dubins}
L.~E. Dubins, ``On curves of minimal length with a constraint on average
  curvature, and with prescribed initial and terminal positions and tangents,''
  \emph{Amer. J. Math.}, vol.~79, 1957.

\bibitem{Reeds_Shepp}
J.~A. Reeds and L.~A. Shepp, ``Optimal paths for a car that goes both forwards
  and backwards,'' \emph{Pacific J. Math.}, vol. 145, 1990.

\bibitem{PMP}
L.~S. Pontryagin, V.~G. Boltyanskii, R.~V. Gamkrelidze, and E.~F. Mishchenko,
  \emph{The mathematical theory of optimal processes}.\hskip 1em plus 0.5em
  minus 0.4em\relax Interscience Publishers, 1962.

\bibitem{boissonat}
J.-D. Boissonnat, A.~Cerezo, and J.~Leblond, ``Shortest paths of bounded
  curvature in the plane,'' in \emph{Proc. 1992 IEEE Int. Conf. Robot.
  Automat.}, 1992, pp. 2315--2320.

\bibitem{sussman_geometric_examples}
H.~J. Sussmann and G.~Tang, ``Shortest paths for the reeds-shepp car: a worked
  out example of the use of geometric techniques in nonlinear optimal
  control,'' Rutgers University, Tech. Rep., 1991.

\bibitem{phase_portrait_kaya}
C.~Y. Kaya, ``Markov–dubins path via optimal control theory,'' \emph{Comput.
  Optim. Appl.}, vol.~68, pp. 714--747, 2017.

\bibitem{weighted_Markov_Dubins}
D.~P. Kumar, S.~Darbha, S.~G. Manyam, and D.~Casbeer, ``The weighted
  markov-dubins problem,'' \emph{IEEE Robotics and Automation Letters}, vol.~8,
  no.~3, pp. 1563--1570, 2023.

\bibitem{monroy}
F.~Monroy-Pérez, ``Non-euclidean dubins' problem,'' \emph{J. Dyn. Control
  Syst.}, vol.~4, pp. 249--272, 1998.

\bibitem{sussman_3D}
H.~Sussmann, ``Shortest 3-dimensional paths with a prescribed curvature
  bound,'' in \emph{Proc. 1995 34th IEEE Conf. Decision Control}, 1995, pp.
  3306--3312.

\bibitem{time_optimal_synthesis_SO3}
U.~Boscain and Y.~Chitour, ``Time-optimal synthesis for left-invariant control
  systems on so(3),'' \emph{SIAM Journal on Control and Optimization}, vol.~44,
  no.~1, pp. 111--139, 2005.

\bibitem{time_optimal_control_satellite}
U.~Boscain, Y.~Chitour, and B.~Piccoli, ``Time optimal control of a satellite
  with two rotors,'' in \emph{2001 European Control Conference (ECC)}, 2001,
  pp. 1721--1725.

\bibitem{3D_Dubins_sphere}
S.~Darbha, A.~Pavan, R.~Kumbakonam, S.~Rathinam, D.~W. Casbeer, and S.~G.
  Manyam, ``Optimal geodesic curvature constrained dubins’ paths on a
  sphere,'' \emph{Journal of Optimization Theory and Applications}, vol. 197,
  pp. 966--992, 2023.

\bibitem{PMP_lecture_notes}
P.~D. Loewen, ``The pontryagin maximum principle,'' 2012, {M403} Lecture Notes.

\end{thebibliography}

\appendix

\subsection{Derivation of rotation matrices for sphere segments} \label{app: derivation_rot_mat}

Consider the differential equations given in \eqref{eq: X_evolution}, \eqref{eq: T_evolution}, and \eqref{eq: N_evolution}. As $u_g$ is piecewise constant, the differential equations for each segment can be represented as
\begin{align}
    \mathbf{R}' (s) &= \mathbf{R} (s) \underbrace{\begin{pmatrix}
        0 & -1 & 0 \\
        1 & 0 & -u_g \\
        0 & u_g & 0
    \end{pmatrix}}_{\Omega},
\end{align}
where $\mathbf{R} = \begin{bmatrix}
    \mathbf{X} (s) & \mathbf{T} (s) & \mathbf{N} (s)
\end{bmatrix}$. The solution to the above differential equations can be obtained as
\begin{align}
    \mathbf{R} (s) = \mathbf{R} (s_i) \underbrace{\left(e^{\Omega^T \Delta s} \right)^T}_{\mathbf{R}_{S}},
\end{align}
where $\Delta s = s - s_i.$ The expression for $e^{\Omega^T \Delta s}$ can be obtained using the Euler-Rodriguez formula for exponential of a skew-symmetric matrix. For this purpose, define $\hat{\Omega} = \frac{1}{\sqrt{1 + u_g^2}} \Omega,$ and $\phi$ such that $\phi = \Delta s \sqrt{1 + u_g^2}.$ Hence, $\hat{\Omega} \phi = \Omega \Delta s$. It should be noted that $\phi$ represents the angle of the segment. Obtaining the expression for $e^{\Omega^T \Delta s},$ $\mathbf{R}_{S}$, where $S \in \{G, L, R\}$, is obtained as 
\begin{align*} 
    \mathbf{R}_G (\phi) &= \begin{pmatrix}
        c \phi & - s \phi & 0 \\
        s \phi & c \phi & 0 \\
        0 & 0 & 1
    \end{pmatrix}, \\
    \mathbf{R}_L (r, \phi) &= \begin{pmatrix}
        \beta_{11} & - r s \phi & \beta_{13} \\
        r s \phi & c \phi & - \beta_{23} \\
        \beta_{13} & \beta_{23} & \beta_{33}
    \end{pmatrix}, \\
    \mathbf{R}_R (r, \phi) &= \begin{pmatrix}
        \beta_{11} & - r s \phi & -\beta_{13} \\
        r s \phi & c \phi & \beta_{23} \\
        -\beta_{13} & -\beta_{23} & \beta_{33}
    \end{pmatrix}.
\end{align*}
In the above equation,
\begin{align*}
    \beta_{11} &= 1 - (1 - c \phi) r^2, \quad \beta_{13} = (1 - c \phi) r \sqrt{1 - r^2}, \\
    \beta_{23} &= s \phi \sqrt{1 - r^2}, \quad \beta_{33} = c \phi + (1 - c \phi) r^2.
\end{align*}

\end{document}